\numberwithin{equation}{section}
\newcommand{\Z}{\mathbb{Z}}
\newcommand{\N}{\mathbb{N}}
\newcommand{\C}{\mathbb{C}}
\newcommand{\Q}{\mathbb{Q}}
\newcommand{\bd}{\begin{description}}
\newcommand{\ed}{\end{description}}
\newtheorem{theorem}{Theorem}[section]
\newtheorem{cor}[theorem]{Corollary}
\newtheorem{lemma}[theorem]{Lemma}
\newtheorem{prop}[theorem]{Proposition}
\theoremstyle{definition}
\newtheorem{definition}[equation]{Definition}
\newtheorem{definition*}[theorem]{Definition}
\theoremstyle{remark}
\DeclareMathOperator{\spec}{Spec}
\DeclareMathOperator{\sing}{Sing}
\newtheoremstyle{dotless}{}{}{\itshape}{}{\bfseries}{}{ }{}
\theoremstyle{dotless}
\numberwithin{equation}{section}
\title{\scshape \normalsize Uncountable $n$-dimensional Excellent Regular Local Rings with Countable Spectra}
\date{}
\begin{document}
\onehalfspacing

\author{S. Loepp and A. Michaelsen}
%\address{Department of Mathematics, Williams College, 33 Stetson, Williamstown, MA 01267}
%\email{anm1@williams.edu}

\pagenumbering{arabic}
\maketitle

\begin{abstract}
We prove that, for any $n\geq 0$, there exists an uncountable, $n$-dimensional, excellent, regular local ring with countable spectrum. 
\end{abstract}

\section{Introduction}

A remarkable result by M. Hochster in \cite{Hochster} gave exact conditions under which a partially ordered set (poset) could be realized as the spectrum of a commutative ring. Little is known on this question, however, when we place further restrictions on the resulting ring. For instance, the question is open when we require the ring to be Noetherian. Previous work done by R. Wiegand, S. Wiegand, and C. Colbert, among others, has provided specific examples of posets that can be realized as the spectra of Noetherian rings \cites{Colbert, Wiegand}. For instance, C. Colbert recently proved, for any $n\geq 2$, the existence of an uncountable, $n$-dimensional, Noetherian ring with a countable spectrum. While this is clear for dimensions 0 and 1, it was unknown in higher dimensions until Colbert proved his result. 
In this paper, we prove a similar result to Colbert's but with stronger conditions on the ring. In particular, our main result is as follows: 

\begin{theorem}
For any $n\geq 0$, there exists an uncountable, $n$-dimensional, excellent, regular local ring with a countable spectrum. 
\end{theorem}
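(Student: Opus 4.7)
The cases $n = 0$ and $n = 1$ admit immediate classical examples: any uncountable field is a $0$-dimensional excellent regular local ring with singleton spectrum, and $\R[[x]]$ is an uncountable complete (hence excellent) regular local ring of dimension $1$ with spectrum of size $2$. I would therefore focus on $n \geq 2$. Fix a countable perfect field $K$ (for concreteness $K = \Q$) and set $T = K[[x_1, \ldots, x_n]]$, an uncountable complete regular local ring of dimension $n$. Since $\spec(T)$ is itself uncountable for $n \geq 2$, the desired ring $R$ must be constructed as a proper subring of $T$.

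The plan is a transfinite construction. Starting from the countable excellent regular local ring $A_0 = K[x_1, \ldots, x_n]_{(x_1, \ldots, x_n)}$, which has completion $T$ and (being itself countable) countable spectrum, I would build an ascending chain $A_0 \subseteq A_1 \subseteq \cdots \subseteq A_\alpha \subseteq \cdots$ of countable Noetherian local subrings of $T$, each with completion $T$, and take $R = \bigcup_{\alpha < \omega_1} A_\alpha$. At each successor stage, an element $\tau \in T \setminus A_\alpha$ is adjoined, and one localizes at the contraction of $\mathfrak{m}_T$ to obtain $A_{\alpha+1}$, with $\tau$ chosen so that the resulting ring remains Noetherian with completion $T$ and so that no ``new'' primes are introduced---that is, the contraction map $\spec(A_{\alpha+1}) \to \spec(A_\alpha)$ is a bijection. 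Running the induction through $\omega_1$ stages yields a ring $R$ of cardinality $\aleph_1$, hence uncountable; since the spectrum of a directed union of rings is the inverse limit of the individual spectra, and the transition maps are all bijections, $\spec(R)$ remains in bijection with $\spec(A_0)$ and is countable.

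The technical heart is the successor step, which combines two nontrivial ingredients: a Heitmann-type flatness criterion ensuring that $A_{\alpha+1} \to T$ remains flat with $\mathfrak{m}_{A_{\alpha+1}} T = \mathfrak{m}_T$ (so $A_{\alpha+1}$ is Noetherian with completion $T$), and a genericity/avoidance argument showing that uncountably many $\tau \in T$ satisfy the required ``no new primes'' condition. The latter requires identifying a countable collection of ``bad'' nonmaximal primes of $T$ --- those whose behavior would force a new prime in $A_{\alpha+1}$ --- and showing that $T \setminus \bigcup_i \mathfrak{P}_i$ is still uncountable, a straightforward cardinality estimate since $|T| = 2^{\aleph_0}$. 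I expect this successor lemma to be the main obstacle: balancing the flatness condition, the completion condition, and the spectrum condition simultaneously in the choice of $\tau$ is delicate, and proceeds in the spirit of Heitmann's constructions of Noetherian rings with prescribed completions and their subsequent refinement in Loepp's work.

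Finally, I would verify that the limit ring $R$ is regular and excellent. Regularity is inherited from $T$ via the faithful flatness of $R \to \widehat{R} = T$ and the regularity of $T$. For excellence, since $\widehat{R}$ is regular, it suffices to show that the formal fibers of $R$ are geometrically regular; this can be verified using the perfectness of $K$ together with the specific flat structure maintained throughout the construction, checking the formal-fiber condition at each of the countably many primes of $R$. Universal catenarity is automatic from $\widehat{R} = T$ being so. Together these would establish that $R$ is the required uncountable, $n$-dimensional, excellent, regular local ring with countable spectrum.
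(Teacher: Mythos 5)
Your high-level plan shares a family resemblance with the paper's --- a transfinite chain of subrings of $T=\Q[[x_1,\ldots,x_n]]$ built from the localized polynomial ring, maintaining $\widehat{A_\alpha}=T$ throughout --- but the invariant you propose to carry along is not the one the paper carries, and I believe it cannot be maintained by the avoidance argument you sketch. You want the contraction map $\spec(A_{\alpha+1})\to\spec(A_\alpha)$ to be a \emph{bijection} at every successor step. The paper does not try to do this, and for good reason: nothing in the construction controls the spectrum of the intermediate rings directly. Instead the paper maintains a much weaker invariant, the ``$S^\star$-subring'' condition: for every $P\in\spec T$, if $P\cap S=(0)$ then $P\cap A_\alpha=(0)$, where $S$ is a carefully prepared countable \emph{excellent} base ring with $\widehat{S}=T$ having the additional property that it ``contains all its factors'' (whenever $s\in S$ lies in $pT$ for a prime element $p$ of $T$, some associate $pu$ lies in $S$). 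That base-ring property is the linchpin: it is what makes Lemma 4.4 work (any element of $R\cap M$ factors as an element of $S\cap M$ times a unit of $T$), which in turn --- together with the final closing-up step producing $B$ with $bT\cap B=bB$ --- yields Lemma 6.1: \emph{every} finitely generated ideal of $B$ is generated by elements of $S$. Countability of $\spec B$ then falls out of the surjection from the (countable) ideal set of $S$ onto the ideal set of $B$, not from tracking spectra at each step.

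Two further concrete gaps. First, your ``straightforward cardinality estimate'' is not straightforward: a countable union of nonmaximal primes of $T$ can have the same cardinality $2^{\aleph_0}$ as $T$, so cardinality alone does not give you something to adjoin; one needs a countable-prime-avoidance / Baire-category argument in the complete local ring $T$. More seriously, the ``bad'' set is not a countable union of primes of $T$ at all: the condition you need to preserve involves, for \emph{every} nonzero polynomial $f\in A_\alpha[X]$, forcing $f(\tau)$ to avoid lying in any prime $pT$ with $pT\cap S=(0)$, and there are uncountably many such primes. The paper handles this not by avoidance but by building $\tau$ explicitly as a power series $u=1+A_1z_1+A_2z_1z_2+\cdots$ whose coefficients $z_i\in S\cap M$ are chosen algorithmically, one at a time, against an enumeration of $R[X]$, so that every nonzero $G(u)$ factors as $z_k$ times a unit (Lemma 5.1); this is what the $S^\star$-condition demands and it is not a genericity statement. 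Second, your excellence argument is underspecified: saying the formal fibers ``can be verified using the perfectness of $K$'' does not explain why $(T/PT)_Q$ is regular for primes $P$ of the union ring. The paper gets this for free precisely because $P$ is extended from the excellent ring $S$, so $T/PT$ is the same as the formal fiber ring for $P\cap S$ in $S$. Without the ``ideals extended from $S$'' mechanism you have no handle on this.
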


In the case of $n=0$ and $n=1$, $\C$ and $\C[[x]]$, respectively, are examples of such  rings. In the case that $n\geq 2$, we prove the existence of such a ring constructively. The construction takes place between the polynomial ring $\Q[x_1,\ldots, x_n]$ and the corresponding power series ring $\Q[[x_1,\ldots, x_n]]$ and consists of two major steps. In the first step, we construct a local (Noetherian) countable base ring, $S$, with completion  $\Q[[x_1,\ldots, x_n]]$ such that $S$ is both excellent and has the property that it contains (up to units) every prime element in the power series ring that divides any element in $S$. In the second step, we algorithmically adjoin uncountably elements to this ring $S$ so that every ideal of the resulting ring, $B$, is extended from $S$. We use this to show that $B$ has completion  $\Q[[x_1,\ldots, x_n]]$ and is an excellent regular local ring. 

In section 2 we present preliminaries. In section 3 we construct the base ring, $S$. In section 4 we introduce some useful definitions for the remainder of the construction. In section 5 we construct the final ring, $B$, and then prove that $B$ is an uncountable, $n$-dimensional, excellent, regular local ring with a countable spectrum in section 6.

\section{Preliminaries}\label{preliminaries}

For the rest of this paper assume $n\geq 2$, and define $R_0=\Q[x_1, \ldots, x_n]$, $T=\Q[[x_1,\ldots, x_n]]$ and $M=(x_1, \ldots, x_n)T$ the maximal ideal of $T$. We will use \textit{quasi-local} to refer to a ring with a unique maximal ideal and \textit{local} to refer to a Noetherian quasi-local ring. When $R$ is a local ring, $\widehat{R}$ indicates the completion of $R$ at its maximal ideal $M$.

In this paper we will be constructing many rings with completion $T$. To show that they have this property, we make use of the following proposition. 

\begin{prop}[{\cite[Proposition 1]{Heitmann}}]\label{completion proving machine}
If $(R,R\cap M)$ is a quasi-local subring of a complete local ring $(\mathcal{R},M)$, the map $M\rightarrow \mathcal{R}/M^2$ is onto and $I\mathcal{R}\cap R=IR$ for every finitely generated ideal $I$ of $R$, then $R$ is Noetherian and the natural homomorphism $\widehat{R}\rightarrow \mathcal{R}$ is an isomorphism. 
\end{prop}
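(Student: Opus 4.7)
The plan is to isolate three intermediate claims and then assemble them. Writing $\mathfrak{m}:=R\cap M$ for the maximal ideal of $R$, the claims are: (i) $R$ is Noetherian; (ii) $\mathfrak{m}\mathcal{R}=M$; and (iii) the inclusion $R\hookrightarrow\mathcal{R}$ induces isomorphisms $R/\mathfrak{m}^k\xrightarrow{\sim}\mathcal{R}/M^k$ for every $k\ge 1$. Once (iii) is in hand, the conclusion $\widehat{R}\cong\mathcal{R}$ drops out by passing to the inverse limit and invoking completeness of $\mathcal{R}$.

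For (i), given an ideal $J\subseteq R$, Noetherianity of $\mathcal{R}$ produces finitely many $j_1,\dots,j_r\in J$ whose images generate $J\mathcal{R}$; setting $I=(j_1,\dots,j_r)R$, the finite-ideal hypothesis yields $J\subseteq J\mathcal{R}\cap R=I\mathcal{R}\cap R=I\subseteq J$, so $J=I$ is finitely generated. For (ii), the surjection hypothesis gives $M\subseteq\mathfrak{m}+M^2\subseteq\mathfrak{m}\mathcal{R}+M^2$; multiplying through by $M$ repeatedly yields $M\subseteq\mathfrak{m}\mathcal{R}+M^k$ for all $k$. Since by (i) the ideal $\mathfrak{m}$ is finitely generated, $\mathfrak{m}\mathcal{R}$ is a finitely generated ideal in the complete local ring $\mathcal{R}$, hence closed in the $M$-adic topology, so $M\subseteq\bigcap_k(\mathfrak{m}\mathcal{R}+M^k)=\mathfrak{m}\mathcal{R}$, and the reverse inclusion is trivial.

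For (iii), injectivity comes from applying the finite-ideal hypothesis to $\mathfrak{m}^k$: combined with (ii) this gives $R\cap M^k=R\cap\mathfrak{m}^k\mathcal{R}=\mathfrak{m}^k$. Surjectivity I would prove by induction on $k$; the base case is the surjection hypothesis, and for the inductive step any generating product $\alpha_1\cdots\alpha_k$ of $M^k$ (each $\alpha_i\in M$) can be written as $\alpha_i=a_i+\beta_i$ with $a_i\in\mathfrak{m}$ and $\beta_i\in M^2$, whence expansion gives $\alpha_1\cdots\alpha_k\equiv a_1\cdots a_k\pmod{M^{k+1}}$, a quantity lying in $\mathfrak{m}^k\subseteq R$. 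Combined with the inductive hypothesis at level $k$, this shows every class in $\mathcal{R}/M^{k+1}$ has an $R$-representative.

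The main obstacle, and the reason the three claims must be addressed in this precise order, is step (ii): the argument that $\mathfrak{m}\mathcal{R}$ is closed in $\mathcal{R}$ (and hence that the inductive containment passes to the infinite intersection) relies on $\mathfrak{m}$ being finitely generated, which in turn depends on the Noetherianity already established in (i). The remaining content is essentially bookkeeping around this topological point.
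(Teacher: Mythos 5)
The paper gives no proof of this proposition---it is quoted directly from Heitmann---so there is no in-paper argument to compare against; what you have written is essentially Heitmann's original argument, and the overall structure is correct. Two housekeeping points. First, the statement as printed in the paper reads ``the map $M\to\mathcal{R}/M^2$ is onto,'' which is a typo for $R\to\mathcal{R}/M^2$ (the proof of Corollary~\ref{alt completion proving machine} in the paper verifies exactly this), and your write-up silently and correctly uses the intended hypothesis. Second, in step (ii) the closedness of $\mathfrak{m}\mathcal{R}$ in the $M$-adic topology follows from Krull intersection applied to the Noetherian local ring $\mathcal{R}$ alone; the finite generation of $\mathfrak{m}$ as an $R$-ideal, and hence the dependence on (i) you emphasize, is not actually needed there, although it is harmless.

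There is one place where the argument is genuinely underspecified: surjectivity in step (iii). You treat a single monomial generator $\alpha_1\cdots\alpha_k$ of $M^k$, but an arbitrary element of $M^k$ is a finite $\mathcal{R}$-linear combination $\sum_j c_j\,\alpha_{1,j}\cdots\alpha_{k,j}$, and the coefficients $c_j\in\mathcal{R}$ must also be approximated by elements of $R$ modulo a high power of $M$ before the whole expression lands in $R+M^{k+1}$. This is a real but easily repaired omission: from (ii) one gets $M^k=(\mathfrak{m}\mathcal{R}+M^2)^k\subseteq\mathfrak{m}^k\mathcal{R}+M^{k+1}$, and then $\mathfrak{m}^k\mathcal{R}=\mathfrak{m}^k(R+M^2)\subseteq R+M^{k+1}$ using $\mathcal{R}=R+M^2$, which together with the inductive hypothesis gives $\mathcal{R}=R+M^{k+1}$. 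With that clause added, the proof is complete and matches the standard (Heitmann) route.
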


Applied to a subring of $T=\Q[[x_1,  \ldots, x_n]]$, this yields the following corollary. 

\begin{cor}\label{alt completion proving machine}
Let $(R,R\cap M)$ be a quasi-local subring of $T$ such that $R_0\subseteq R$, and, assume that, for every finitely generated ideal $I$ of $R$, $IT\cap R=IR$. Then $R$ is Noetherian, $\widehat{R}=T$ and $R$ is a regular local ring (RLR). 
\end{cor}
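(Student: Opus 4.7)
The plan is to apply Proposition \ref{completion proving machine} with $\mathcal{R}=T$. Two of that proposition's three hypotheses are granted outright: $(R,R\cap M)$ is a quasi-local subring of $T$, and $IT\cap R = IR$ for every finitely generated ideal $I$ of $R$. The remaining condition asks that the map from the maximal ideal of $R$ to $T/M^2$ be onto $M/M^2$, equivalently that $(R\cap M) + M^2 = M$. For this I would use the containment $R_0 \subseteq R$: the variables $x_1,\ldots,x_n \in R_0$ lie in $R\cap M$, and their residues form a $\Q$-basis of the $(T/M)$-vector space $M/M^2$, so the surjection is immediate. Proposition \ref{completion proving machine} then yields that $R$ is Noetherian and $\widehat{R}=T$.

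To upgrade the Noetherian conclusion to regular local, I would observe that $\dim R = \dim\widehat{R} = \dim T = n$, so it suffices to exhibit $n$ generators for the maximal ideal $\mathfrak{m} = R\cap M$ of $R$. Applying the ideal-contraction hypothesis with $I = (x_1,\ldots,x_n)R$ and using $IT = M$, we get $\mathfrak{m} = R\cap M = IT\cap R = IR = (x_1,\ldots,x_n)R$, which has $n$ generators. Since a Noetherian local ring of dimension $n$ whose maximal ideal is generated by $n$ elements is regular by definition, the conclusion follows.

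The corollary is essentially a repackaging of Proposition \ref{completion proving machine} tailored to subrings of $T$ containing $R_0$, so there is no real obstacle. The hypothesis $R_0 \subseteq R$ does double duty: it supplies the cotangent-space surjectivity needed to invoke the proposition, and, combined with the ideal-contraction hypothesis, it produces a minimal generating set for the maximal ideal that forces regularity.
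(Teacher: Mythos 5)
Your proof is correct and follows the same main line as the paper: apply Heitmann's Proposition~\ref{completion proving machine} with $\mathcal{R}=T$, using $R_0\subseteq R$ to get surjectivity onto $T/M^2$ (the paper writes $t=s+m$ with $s\in R_0$, $m\in M^2$; you phrase it as the images of $x_1,\ldots,x_n$ spanning $M/M^2$ — the same content). The one place you diverge is the regularity step. The paper simply invokes the standard fact that if $\widehat{R}$ is a regular local ring then so is $R$. You instead give a self-contained argument: apply the ideal-contraction hypothesis to $I=(x_1,\ldots,x_n)R$ to conclude $R\cap M=(x_1,\ldots,x_n)R$, and since $\dim R=\dim\widehat{R}=n$, the maximal ideal has a system of $n$ generators, so $R$ is regular by definition. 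Both are valid; the paper's appeal to descent of regularity along completion is shorter, while your argument is more elementary and, as a bonus, directly exhibits the regular system of parameters (and recovers the observation, used after Theorem~\ref{closed ideals of ring B}, that the maximal ideal of such a ring is extended from $(x_1,\ldots,x_n)$).
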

\begin{proof}
We will  use Proposition \ref{completion proving machine} to show that $R$ has completion $T$. To do this, we will show that the map $R\rightarrow T/M^2$ is onto. Given $t+M^2\in T/M^2$, %we know that $t=a_0+a_1x_1+\cdots + a_nx_n+m$ for $a_i\in \Q$ and $m\in M^2$. Define $s=a_0+a_1x_1+\cdots + a_nx_n$. Notice that $s\in R_0\subseteq R$. 
we know that $t=s+m$ for $s\in R_0$ and $m\in M^2$. 
Then, $t-s=m\in M^2$, so $s+M^2=t+M^2$. Thus the map is onto. Since, for any finitely generated ideal $I$ of $R$ we have that $IT\cap R=IR$, by Proposition \ref{completion proving machine}, $R$ is Noetherian with completion $T$. Furthermore, since $T$ is a RLR, so is $R$.  
\end{proof}

Note that if $(R,R\cap M)$ is a local ring with $\widehat{R}=\mathcal{R}$, then $\mathcal{R}$ is a faithfully flat extension of $R$ and so any finitely generated ideal $I$ of $R$ satisfies $I\mathcal{R}\cap R=IR$. 
The following definitions and lemmas pertain to excellent rings. 
Define, for any $P\in\spec A$, $k(P)=A_P/PA_P$.

\begin{definition}[{\cite[Definition 1.4]{Rotthaus}}]
\label{def of excellent}
Given a local ring $(A,A\cap M)$, $A$ is excellent if
\begin{itemize}
\item[(a)] For all $P\in \spec A$, $\widehat{A}\otimes_A L$ is regular for every finite field extension $L$ of $k(P)$, and 
\item[(b)] $A$ is universally catenary. 
\end{itemize}
\end{definition}

As noted in \cite{Rotthaus}, we can consider only the purely inseparable finite field extensions $L$ of $k(P)$.
The following is a consequence of Theorem 31.6 and the definition of formally equidimensional in \cite[pp. 251]{Matsumura}.

\begin{theorem}\label{universally catenary}
Let $(A,M)$ be a local ring such that its completion, $\widehat{A}$, is equidimensional. Then $A$ is universally catenary. 
\end{theorem}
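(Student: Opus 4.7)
The plan is to reduce to Ratliff's theorem: combining Matsumura's Theorem~31.6 with the definition of \emph{formally equidimensional} on p.~251 of \cite{Matsumura} yields that a Noetherian local ring $A$ is universally catenary if and only if, for every $\mathfrak{p}\in\spec A$, the quotient $A/\mathfrak{p}$ has equidimensional completion. Since $\widehat{A/\mathfrak{p}} = \widehat{A}/\mathfrak{p}\widehat{A}$, it therefore suffices to show that whenever $\widehat{A}$ is equidimensional, so is $\widehat{A}/\mathfrak{p}\widehat{A}$ for every $\mathfrak{p}\in\spec A$.

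To do this, I would first use that $\widehat{A}$ is a complete local ring and hence catenary. Combined with the equidimensionality hypothesis, catenary-ness gives $\operatorname{ht}(\mathfrak{P}) + \dim(\widehat{A}/\mathfrak{P}) = \dim\widehat{A} = \dim A$ for every prime $\mathfrak{P}$ of $\widehat{A}$: pick a minimal prime $\mathfrak{q}\subseteq \mathfrak{P}$, note $\operatorname{ht}(\mathfrak{P}) = \operatorname{ht}(\mathfrak{P}/\mathfrak{q})$, use catenary-ness in $\widehat{A}/\mathfrak{q}$ to write $\operatorname{ht}(\mathfrak{P}/\mathfrak{q}) + \dim(\widehat{A}/\mathfrak{P}) = \dim(\widehat{A}/\mathfrak{q})$, and invoke equidimensionality to conclude $\dim(\widehat{A}/\mathfrak{q}) = \dim\widehat{A}$.

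Next, fix $\mathfrak{p}\in\spec A$ and let $\mathfrak{P}$ be any minimal prime of $\widehat{A}$ over $\mathfrak{p}\widehat{A}$. Faithful flatness of $A\to\widehat{A}$ yields going-down, which forces $\mathfrak{P}\cap A = \mathfrak{p}$: if some prime $\mathfrak{p}'\subsetneq\mathfrak{P}\cap A$ contained $\mathfrak{p}$ strictly, it would lift to a prime strictly between $\mathfrak{p}\widehat{A}$ and $\mathfrak{P}$, contradicting minimality. The standard flat dimension formula then gives $\operatorname{ht}(\mathfrak{P}) = \operatorname{ht}(\mathfrak{p}) + \dim(\widehat{A}_{\mathfrak{P}}/\mathfrak{p}\widehat{A}_{\mathfrak{P}}) = \operatorname{ht}(\mathfrak{p})$, the second summand vanishing by minimality of $\mathfrak{P}$ over $\mathfrak{p}\widehat{A}$. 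Combining with the previous paragraph, $\dim(\widehat{A}/\mathfrak{P}) = \dim A - \operatorname{ht}(\mathfrak{p})$, which is independent of the particular minimal prime $\mathfrak{P}$ over $\mathfrak{p}\widehat{A}$; hence $\widehat{A}/\mathfrak{p}\widehat{A}$ is equidimensional.

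The main obstacle is really the upfront reduction to Ratliff's theorem: once one has the characterization of universally catenary local rings in terms of formal equidimensionality of every quotient, the rest is assembling well-known facts (complete local rings are catenary, equidimensionality plus catenary-ness yields a uniform height formula, faithful flatness yields going-down and the dimension formula). I expect no genuinely hard computations — the content of the theorem lies entirely in citing Matsumura and recognizing that equidimensionality of $\widehat{A}$ is inherited by each quotient $\widehat{A}/\mathfrak{p}\widehat{A}$.
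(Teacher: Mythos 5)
Your proof is correct and follows the same route as the paper: both reduce the statement to Matsumura's Theorem~31.6 (Ratliff's criterion) together with the definition of formal equidimensionality on p.~251. The paper leaves the matter at the citation, whereas you actually carry out the intermediate verification that the citation requires — namely, that equidimensionality of $\widehat{A}$ descends to $\widehat{A}/\mathfrak{p}\widehat{A}$ for every $\mathfrak{p}\in\spec A$, via catenariness of the complete local ring, the uniform height formula, and the flat dimension formula — and this filling-in is sound (the only slight gloss is the bare assertion $\operatorname{ht}(\mathfrak{P})=\operatorname{ht}(\mathfrak{P}/\mathfrak{q})$, which is really a consequence of the very catenariness-plus-equidimensionality computation that follows it rather than a prior fact, but reordering those two clauses fixes it).
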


We will now give sufficient criteria for excellent for rings with completion $T$.

\begin{lemma}\label{alt def of excellent}
Given a local ring $(A,A\cap M)$ with $\widehat{A}=T$, $A$ is excellent if, for every $P\in\spec A$ and for any $Q\in\spec T$ with $Q\cap A=P$, $(T/PT)_{Q}$ is a RLR. 
\end{lemma}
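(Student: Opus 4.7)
The goal is to verify the two conditions of Definition 1.2: regularity of certain tensor products (a) and universal catenariness (b). Condition (b) requires almost no work: since $\widehat{A} = T = \Q[[x_1,\ldots,x_n]]$ is a regular local ring, and in particular a domain, it is equidimensional, so Theorem \ref{universally catenary} applies directly to give that $A$ is universally catenary.

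For (a), I would first invoke the remark following Definition \ref{def of excellent} to restrict attention to purely inseparable finite field extensions $L$ of $k(P)$. Because $A$ embeds into the characteristic-zero ring $T$, every residue field $k(P)$ of $A$ has characteristic zero, so the only such extension is the trivial one $L = k(P)$. It therefore suffices to show that $\widehat{A} \otimes_A k(P) = T \otimes_A (A_P/PA_P)$ is a regular ring for every $P \in \spec A$. A standard tensor product computation identifies this with the localization $(T/PT)_{A \setminus P}$ of $T/PT$ at the image of $A \setminus P$; since $T/PT$ is Noetherian, so is this localization.

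To finish, I would use the characterization that a Noetherian ring is regular if and only if each of its localizations at a prime ideal is a regular local ring. Via the standard correspondence for primes in a localized quotient, the primes of $(T/PT)_{A \setminus P}$ correspond bijectively to primes $Q \in \spec T$ satisfying $PT \subseteq Q$ and $Q \cap (A \setminus P) = \emptyset$, equivalently $Q \cap A = P$. The corresponding localization is exactly $(T/PT)_Q$. By hypothesis each such $(T/PT)_Q$ is a RLR, so $(T/PT)_{A \setminus P}$ is regular, which verifies (a).

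The lemma has no genuinely difficult step: it is a repackaging of the definition of excellence in the presence of $\widehat{A} = T$. The characteristic-zero hypothesis (coming from $T$ being a $\Q$-algebra) trivializes the purely inseparable extension requirement, and the fact that $T$ is a regular domain trivializes the universally catenary condition. The only point to handle carefully is the bijection between primes of $(T/PT)_{A \setminus P}$ and primes of $T$ lying over $P$, along with the identification of the associated local rings; this is the main content of the argument.
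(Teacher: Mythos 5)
Your proof is correct and follows essentially the same approach as the paper: reduce to the trivial field extension by the characteristic-zero observation, handle universal catenariness via equidimensionality of $T$, and identify the formal fiber $T \otimes_A k(P)$ with a localization of $T/PT$ whose localizations at primes are the $(T/PT)_Q$. The only difference is that you spell out the bijection between primes of $(T/PT)_{A\setminus P}$ and primes $Q$ of $T$ lying over $P$, a point the paper leaves implicit.
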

\begin{proof}
We know that $A$ is a local ring, and so we must show both conditions of Definition \ref{def of excellent} hold. By Theorem \ref{universally catenary} $A$ is universally catenary. We must then consider $T\otimes_A L$ for every purely inseparable finite field extension of $k(P)$ for each $P\in\spec A$. Since $\Z \subset A$ and all nonzero integers are units, we have that $\Q\subset k(P)$, and so $k(P)$ has characteristic 0. 
Every finite field extension with characteristic 0 is separable. Since it is sufficient to check only purely inseparable field extensions, this leaves only the trivial field extension so we need only show that $T\otimes_A k(P)$ is regular for every $P\in \spec A$. 
Note that $T\otimes _A k(P)$ localized at $Q\otimes k(P)$ is isomorphic to $(T/PT)_Q$. Hence, it suffices to show that $(T/PT)_Q$ is a RLR. 
\end{proof}

We end with a lemma regarding the structure of $\sing(R)$ of excellent rings. 

\begin{lemma}[{\cite[Corollary 1.6]{Rotthaus}}]\label{singular locus is closed}%cor 1.6 in Rotthaus 
If $R$ is excellent, then $\sing(R)$ is closed in the Zariski topology, i.e. $\sing(R)=V(I)$ for some ideal $I$ of $R$. 
\end{lemma}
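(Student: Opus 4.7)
The plan is to show that $\mathrm{Reg}(R) := \spec R \setminus \sing(R)$ is open in $\spec R$, which is equivalent to the desired conclusion. I would begin with the elementary observation that $\sing(R)$ is stable under specialization: if $P \subseteq Q$ are primes of $R$ with $R_P$ non-regular, then $R_Q$ must also be non-regular, since any localization of a regular local ring is regular. In the Noetherian space $\spec R$, a specialization-closed constructible set is automatically closed, so it suffices to show $\sing(R)$ is constructible.

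Next, I would extract from condition (a) of Definition \ref{def of excellent} that $R$ is a G-ring, i.e., the formal fibers $\widehat{R}\otimes_R k(P)$ are geometrically regular. Combined with the faithful flatness of the completion map $\varphi\colon \spec \widehat{R}\to \spec R$, this gives that for each $P\in\spec R$, $R_P$ is regular if and only if $\widehat{R}_Q$ is regular for every prime $Q$ of $\widehat{R}$ lying over $P$. Consequently $\sing(R)=\varphi(\sing(\widehat{R}))$, and since $\widehat{R}$ is a complete Noetherian local ring (hence excellent), $\sing(\widehat{R})$ is closed in $\spec \widehat{R}$, say equal to $V(J)$ for some ideal $J\subseteq \widehat{R}$.

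The main obstacle is the descent step: images of closed sets along $\varphi$ are not in general closed or even constructible, and $R\to \widehat{R}$ is not of finite type, so Chevalley's theorem is unavailable. To surmount this, I would appeal to the foundational theorem of Grothendieck and Rotthaus that a Noetherian local G-ring is quasi-excellent, and in particular enjoys the J-2 property --- the regular locus of every finitely generated $R$-algebra is open. Specializing this to $R$ itself gives that $\mathrm{Reg}(R)$ is open, so $\sing(R)$ is closed and thus equals $V(I)$ for some ideal $I\subseteq R$. An alternative route, if one wishes to avoid invoking J-2 as a black box, is to show directly that $V(J\cap R)=\varphi(V(J))$: the inclusion $\supseteq$ is automatic, and the reverse inclusion requires showing $P\widehat{R}+J\neq \widehat{R}$ whenever $P\supseteq J\cap R$, which can be achieved by a careful application of Krull's intersection theorem together with the G-ring hypothesis on the fiber $\widehat{R}/P\widehat{R}$.
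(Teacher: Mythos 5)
The paper does not prove this lemma; it is quoted verbatim as \cite[Corollary~1.6]{Rotthaus}, where it follows from the Grothendieck--Rotthaus theorem that a local Noetherian G-ring is quasi-excellent (in particular J-2). Your central move---reading off the G-ring hypothesis from condition (a) of Definition~\ref{def of excellent}, invoking that theorem to obtain J-2, and then taking $A=R$ to conclude $\mathrm{Reg}(R)$ is open---is exactly the content of the cited result, so the main line of your argument is correct and matches the reference the paper relies on.

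However, several of the preparatory steps you set up are either unused or do not close. The observation that $\sing(R)$ is specialization-closed and the identity $\sing(R)=\varphi(\sing(\widehat R))$ are both correct, but your route via constructibility is never completed (constructibility of $\sing(R)$ is in fact morally equivalent to what you are trying to prove), and the final J-2 appeal makes no use of either fact. More substantively, the ``alternative route'' at the end understates the difficulty. The condition $P\widehat R + J\neq \widehat R$ is necessary but not the one you need: to find $Q\supseteq J$ lying over $P$ you must show that the image of $J$ in the fiber ring $\widehat R\otimes_R k(P)$ is a proper ideal, i.e.\ $J\widehat R_P + P\widehat R_P\neq \widehat R_P$ after localizing at $R\setminus P$. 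Establishing this is not a routine consequence of Krull's intersection theorem together with fiber regularity; the failure of the completion map to be of finite type is precisely what makes the descent of closedness (equivalently, the implication G $\Rightarrow$ J-2) a hard theorem. As sketched, the alternative does not remove the black box---it relocates it into the one step that is not filled in.
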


\section{Construction of The Base Ring}\label{construction of base ring}
We first construct a countable local excellent ring $(S,S\cap M)$, where $R_0\subseteq S\subseteq T$, $\widehat{S}=T$, and, for any nonzero $s\in S$ with $s\in pT$  for some prime element $p$ of $T$, $pu\in S$ for a unit $u\in T$.

\begin{lemma}\label{add-ideal}
Let $R$ be a countable ring with $R_0\subseteq R\subseteq T$. Then there exists a countable ring $R'$ such that $R\subset R'\subset T$ and, if $I$ is a finitely generated ideal of $R$, then $IT\cap R\subseteq IR'$.
\begin{proof}
Consider the set  
$\Omega = \{(I,c): I\text{ is a finitely generated ideal of }R, c\in IT\cap R\}.$
Since $R$ is countable, so is $\Omega$. Enumerate $\Omega$ with 0 denoting its first element. We will inductively define countable subrings for each $(I,c)\in \Omega$. First let $S_0=R$. Notice that $S_0$ is countable.
Given the $k$th element, $(I,c)$, with the countable ring $S_k$ defined, we will construct $S_{k+1}$. Let $I=(a_1,\ldots, a_\ell)R$ for $a_i\in R$. Then, since $c\in IT\cap R$, we have that $c=a_1t_1+\cdots+a_\ell t_\ell$ for $t_i\in T$. Now let $S_{k+1}=S_k [t_1,\ldots, t_\ell]$. Since $S_k$ is countable, so too is $S_{k+1}$. Furthermore, notice that $c\in IS_{k+1}$.  

Then define $R'=\bigcup_{i=0}^\infty S_i$. We will show that $IT\cap R\subseteq IR'$ for any finitely generated ideal $I$ of $R$. Given a finitely generated ideal $I$ of $R$, and $c\in IT\cap R$, then $(I,c)\in \Omega$. If $(I,c)$ is the $j$th element of $\Omega$, then $c\in IS_{j+1}$. Notice $IS_{j+1}\subseteq IR'$, so then $IT\cap R\subseteq IR'$. 
\end{proof}
\end{lemma}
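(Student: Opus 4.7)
The plan is to adjoin, for each pair $(I,c)$ consisting of a finitely generated ideal $I$ of $R$ and an element $c \in IT \cap R$, the coefficients $t_1,\ldots,t_\ell \in T$ that witness the membership $c \in IT$, and to iterate this countably many times. Since $R$ is countable, the set of finite tuples from $R$ is countable, so $R$ has only countably many finitely generated ideals; for each such ideal $I$, the set $IT \cap R$ sits inside the countable ring $R$. Thus the full index set $\Omega = \{(I,c) : I \text{ a f.g.\ ideal of } R,\ c \in IT \cap R\}$ is countable.

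Next, I would enumerate $\Omega = \{(I_0,c_0),(I_1,c_1),\ldots\}$ and build a chain of countable subrings $R = S_0 \subseteq S_1 \subseteq \cdots$ of $T$ recursively. At stage $k+1$, fix a generating set $I_k = (a_1,\ldots,a_\ell)R$ and, using that $c_k \in I_k T$, choose $t_1,\ldots,t_\ell \in T$ with $c_k = a_1 t_1 + \cdots + a_\ell t_\ell$; then set $S_{k+1} = S_k[t_1,\ldots,t_\ell]$. Countability is preserved at each step because we are adjoining only finitely many elements of $T$ to a countable ring, and the result is still a subring of $T$.

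Finally, define $R' = \bigcup_{k\geq 0} S_k$. As a countable union of countable rings, $R'$ is a countable subring of $T$ containing $R$. To check the conclusion, let $I$ be any finitely generated ideal of $R$ and $c \in IT \cap R$. Then $(I,c) = (I_j,c_j)$ for some $j$, and the construction at stage $j+1$ places an expression of $c_j$ as an $S_{j+1}$-linear combination of the chosen generators of $I_j$ into $S_{j+1}$. Hence $c \in I \cdot S_{j+1} \subseteq IR'$, giving $IT \cap R \subseteq IR'$ as required.

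I do not expect a genuine obstacle here: the argument is an essentially standard ``bookkeeping'' construction of the type used throughout this subject. The one subtlety worth naming is that neither the chosen generating set for $I_k$ nor the expression $c_k = \sum a_i t_i$ is canonical, but any fixed choice at each stage suffices. The critical invariants to keep in view are that $\Omega$ is countable and that each $S_k$ remains countable, so that $R'$ is countable; both of these follow because $T$ contains $R$ and we only ever adjoin finitely many elements of $T$ per stage.
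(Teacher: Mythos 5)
Your proof is correct and follows essentially the same bookkeeping strategy as the paper: enumerate the countable set $\Omega$ of pairs $(I,c)$, build a chain of countable subrings by adjoining at each stage the finitely many coefficients $t_i \in T$ witnessing $c = \sum a_i t_i$, and take the union. The only difference is cosmetic — you spell out why $\Omega$ is countable and flag the non-canonicity of the choices, details the paper leaves implicit.
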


\begin{lemma}\label{close up ideals}
Let $R$ be a countable ring with $R_0\subseteq R\subseteq T$. Then there exists a countable local ring $(R'',R''\cap M)$ such that, $R\subseteq R''\subseteq T$, and, for every finitely generated ideal $I$ of $R''$, $IT\cap R''=IR''$. Hence, $R''$ is Noetherian, $\widehat{R''}=T$, and $R''$ is a RLR.
\begin{proof}
Let $S_0=R$. Note that $S_0$ is countable. Given the ring $S_i$, let $S_{i+1}$ be the countable ring obtained from Lemma \ref{add-ideal}, such that, if $I$ is a finitely generated ideal of $S_i$, we have $IT\cap S_i\subseteq S_{i+1}$. 

Now, define $R'=\bigcup_{i=0}^\infty S_i$. We will show that, for every finitely generated ideal $I$ of  $R'$, $IT\cap R'=I$. Consider some finitely generated ideal $I$ of $R'$. Clearly $IR'\subseteq IT\cap R'$, so suppose $c\in IT\cap R'$. Let $I=(a_1,\ldots, a_m)R'$. For some $k$, $a_i\in S_k$ for all $i$, and $c\in S_k$. Then $c\in (a_1,\ldots, a_m)T\cap S_k\subseteq (a_1,\ldots, a_m)S_{k+1}\subseteq IR'$. Thus we have that $IT\cap R'=IR'$. 

Then define $R''=R'_{R'\cap M}$; we will show that $R''$ will also have the property that, for any finitely generated ideal $I$ of $R''$, $IT\cap R''=IR''$.
If $I=R''$ this holds. 
If $I$ is a proper finitely generated ideal of $R''$, then $I=\big(\tfrac{a_1}{b_1},\ldots, \tfrac{a_m}{b_m}\big)R''=(a_1,\ldots, a_m)R''$ for $a_i\in R'$. Since $I$ is proper, we know that $a_i\in R'\cap M$. Suppose $x\in IT\cap R''$.  Then since $x\in R''$, $x=ab^{-1}$ for $a\in R'$ and $b\notin R'\cap M$. Since $x=ab^{-1}\in IT$, $xb=a\in IT\cap R'=(a_1,\ldots, a_m)R'\subseteq IR''$. Since $b$ is invertible in $R''$, this means that $x=ab^{-1}\in IR''$, so $IT\cap R''=IR''$, as desired. 
Thus by Corollary \ref{alt completion proving machine}, $R''$ is Noetherian, has completion $T$, and is a RLR.  
\end{proof}
\end{lemma}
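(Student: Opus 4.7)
The plan is to iterate Lemma~\ref{add-ideal} to build a countable ring in which every finitely generated ideal is closed with respect to contraction from $T$, then localize at the contraction of the maximal ideal $M$ to obtain the required quasi-local ring; Corollary~\ref{alt completion proving machine} will then yield the ``Hence'' clause automatically.

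First I would construct inductively a countable chain $R = S_0 \subseteq S_1 \subseteq S_2 \subseteq \cdots \subseteq T$, where each $S_{i+1}$ is obtained from $S_i$ by applying Lemma~\ref{add-ideal}, so that $JT \cap S_i \subseteq JS_{i+1}$ for every finitely generated ideal $J$ of $S_i$. Set $R' = \bigcup_{i=0}^\infty S_i$, which remains countable and sits between $R_0$ and $T$. For a finitely generated ideal $I = (a_1, \ldots, a_m)R'$ and an element $c \in IT \cap R'$, all of the finitely many data $a_1, \ldots, a_m, c$ lie in a common $S_k$; the construction of $S_{k+1}$ then forces $c \in (a_1, \ldots, a_m)S_{k+1} \subseteq IR'$, giving $IT \cap R' = IR'$.

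Next I would localize: set $R'' = R'_{R' \cap M}$. Any element of $R' \setminus (R' \cap M)$ is a unit in $T$, so $R''$ embeds into $T$, is countable, and is quasi-local with maximal ideal $R'' \cap M$. The delicate step is to check that the closure property survives localization. Given a proper finitely generated ideal $I$ of $R''$, I would clear denominators to write $I = (a_1, \ldots, a_m)R''$ with $a_i \in R' \cap M$. If $x = ab^{-1} \in IT \cap R''$ with $a \in R'$ and $b \in R' \setminus (R' \cap M)$, then $a = xb$ lies in $IT \cap R' = (a_1, \ldots, a_m)R'$, so $x = ab^{-1} \in IR''$ because $b$ is a unit in $R''$.

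The main obstacle is that last localization step: the intersection-with-$T$ condition is not formally preserved under localization in general, so one must exploit the specific form of $R''$ as fractions of $R'$ and the fact that denominators in the chosen multiplicative set are units in $T$. Once this is established, Corollary~\ref{alt completion proving machine} applies directly to $R''$, furnishing the conclusions that $R''$ is Noetherian, $\widehat{R''} = T$, and $R''$ is a RLR.
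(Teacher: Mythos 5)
Your proposal matches the paper's proof essentially step for step: build the countable chain by iterating Lemma~\ref{add-ideal}, take the union $R'$ and show finitely generated ideals are closed via a common $S_k$, localize at $R'\cap M$, verify closure survives localization by clearing denominators, and invoke Corollary~\ref{alt completion proving machine}. No meaningful differences.
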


Since our base ring, $(S,S\cap M)$, will have completion $T$, by Lemma \ref{alt def of excellent}, to show that $S$ is excellent it is sufficient to show for each $P\in\spec S$ and $Q\in \spec T$ lying over $P$, $(T/PT)_Q$ is a RLR. Given some intermediate ring $R$ with completion $T$, and some $P\in\spec R$, the next lemma shows that there are only countably many $Q\in\spec T$ lying over $P$ in $R$ such that $Q$ is minimal in $\sing(T/PT)$.

\begin{lemma}\label{countable min primes}
Let $(R,R\cap M)$ be a countable local ring with $R_0\subseteq R\subseteq T$ and $\widehat{R}=T$. Then 
$\bigcup\limits_{P\in\spec R}\{Q_j\in\spec T: {Q_j}\in\min I \text{ for $I$ where }\sing(T/PT)=V(I/PT)\}$
is a countable set. 
\begin{proof}
First, since $R$ is countable and Noetherian, $\spec R$ is countable, so it suffices to show that the set is countable with respect to any fixed $P\in\spec R$. Let $P\in \spec R$. Then, since $T$ is a complete local ring, $T/PT$ is excellent. By Lemma \ref{singular locus is closed} $\sing(T/PT)=V(I/PT)$ for some ideal $I$ of $T$.
Then consider the set of minimal prime ideals $Q_j$ of $I$. Since $T$ is Noetherian, this set is finite.
\end{proof}
\end{lemma}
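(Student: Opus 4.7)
The plan is to split the union in the statement according to $P \in \spec R$ and to argue two things: (i) $\spec R$ is itself countable, and (ii) for each fixed $P$ the inner set is finite. A countable union of finite sets is countable, which will give the conclusion.

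For (i), I would use the fact that $R$ is a countable Noetherian ring. Every ideal has a finite generating set from $R$, and since a countable set has only countably many finite subsets, $R$ has only countably many ideals. In particular $\spec R$ is countable.

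For (ii), fix $P \in \spec R$. The key point is that $T = \Q[[x_1, \dots, x_n]]$ is a complete Noetherian local ring, hence excellent; excellence passes to quotients, so $T/PT$ is excellent as well. Lemma \ref{singular locus is closed} then applies to $T/PT$ and yields an ideal of $T/PT$, which pulls back to an ideal $I$ of $T$ with $PT \subseteq I$ and $\sing(T/PT) = V(I/PT)$. Because $T$ is Noetherian, the set of minimal primes of $I$ in $\spec T$ is finite, which is exactly the finiteness claimed in (ii). Combining with (i) gives the lemma.

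The main thing to check is that invoking Lemma \ref{singular locus is closed} is justified, i.e.\ that $T/PT$ is excellent; since this is a quotient of a complete local ring, this is standard. The only remaining subtlety is cosmetic: the ideal $I$ in the statement is only determined by the radical condition $\sing(T/PT) = V(I/PT)$, but any such $I$ has the same minimal primes in $T$ (namely the primes minimal over $\sqrt{I}$), so the set under consideration is well defined and finite regardless of the choice. Once these two points are noted, no hard work remains and the countability follows from the countable–union–of–finite–sets argument above.
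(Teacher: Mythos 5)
Your proposal is correct and follows essentially the same route as the paper: decompose the union over $P\in\spec R$, note that $\spec R$ is countable because $R$ is countable and Noetherian, observe that $T/PT$ is excellent as a (quotient of a) complete local ring so Lemma \ref{singular locus is closed} applies, and conclude with the finiteness of minimal primes over a Noetherian ring plus the fact that a countable union of finite sets is countable. Your extra remarks (spelling out why $\spec R$ is countable and why the set of minimal primes is independent of the choice of $I$) are sound but simply flesh out steps the paper leaves implicit.
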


\begin{definition}\label{factors}
We say a subring $R$ of $T$ \textit{contains all its factors}, if, for any nonzero $r\in R$ with $r\in pT$  for some prime element $p$ of $T$, $pu\in R$ for a unit $u\in T$.
\end{definition}

\begin{theorem}\label{ring S}
There exists an excellent, countable, RLR, $(S,S\cap M)$, such that $R_0\subseteq S\subseteq T$,  $\widehat{S}=T$, and $S$ contains all its factors.
\begin{proof}
Let $S_0={R_0}_{R_0\cap M}$. Notice that $S_0$ is local, has completion $T$ and is a RLR.
 We will define an ascending chain of rings recursively. For each $S_i$ we will ensure that it satisfies the criteria of Lemma \ref{countable min primes}, i.e. that $(S_i,S_i\cap M)$ is a countable local ring with $R_0\subseteq S_i\subseteq T$ and $\widehat{S_i}=T$. Notice that, in the base case, $S_0$ satisfies these conditions. 
Assume that $(S_i,S_i\cap M)$ is a countable local ring with $R_0\subseteq S_i\subseteq T$ and $\widehat{S_i}=T$. We will construct $S_{i+1}$ to satisfy these as well. First, for each $s\in S_i$ where $s$ is a nonzero non-unit, since $T$ is a UFD, choose exactly one factorization of $s$ in $T$, $s=p_{s_1}\cdots p_{s_m}$, for prime elements $p_{s_j}$ in $T$. Define $P(S_i)=\bigcup\limits_{s\in S_i} \{p_{s_j}\}$. Since each $s$ adds only  finitely many $p_i$ and $S_i$ is countable, this set is countable. 

Next, by Lemma \ref{countable min primes}, the set
$$ \bigcup_{P\in\spec S_i}\{Q_j: {Q_j}\in\min I \text{ for $I$ where }\sing(T/PT)=V(I/PT)\}$$
is countable. Since $T$ is Noetherian, every $Q_j$ is finitely generated. Choose exactly one generating set, $\{q_{j_1},\ldots, q_{j_\ell}\}\subseteq T$ for each $Q_j$. Define $G(S_i)=\bigcup_{Q_j}\{q_{j_k}\}$. Each $Q_j$ adds only finitely many $q_j$'s so $G(S_i)$ is countable. 

Now given $S_i$, define $S'_i=S_i[G(S_i),P(S_i)]$. Notice since $S_i, G(S_i)$, and $P(S_i)$ are countable, so is $S'_i$. Then let $(S_{i+1},S_{i+1}\cap M)$ be the countable local ring obtained by applying Lemma \ref{close up ideals} to $S'_i$, so $\widehat{S}_{i+1}=T$ and $S_{i+1}$ is a RLR. Notice that $S_{i+1}$ satisfies the conditions of Lemma \ref{countable min primes} as needed.  

Define $S = \bigcup_{i=0}^\infty S_i$. We will show that $S$ is a countable, excellent, RLR such that $R_0\subseteq S\subseteq T$, $\widehat{S}=T$, and $S$ contains all its factors. First, since each $S_i$ is countable, their countable union is countable so $S$ is countable, as desired. Furthermore, since each $S_i$ satisfies $R_0\subseteq S_i\subseteq T$, so does $S$. 
Next, since each  $S_i$ has a unique maximal ideal $S_i\cap M$, $S$ is is quasi-local with maximal ideal $S\cap M$. 
Note that, for every $i$, since $\widehat{S_i}=T$, if $I$ is a finitely generated ideal of $S_i$, then $IT\cap S_i=IS_i$. We will show that this holds for $S$ as well. 

Suppose $I$ is some finitely generated ideal of $S$. It is clear that $IS\subseteq IT\cap S$, so we will show that $IT\cap S\subseteq IS$. Since $I$ is finitely generated, $I=(a_1,\ldots, a_m)S$ for $a_i\in S$. Let $c\in IT\cap S$. Choose $\ell$ such that $a_i,c\in S_\ell$. Then $c\in (a_1,\ldots, a_m)T\cap S_\ell= (a_1,\ldots, a_m)S_{\ell}\subset IS$.  Thus $IT\cap S\subseteq IS$. Since $R_0\subseteq S$,   by Corollary \ref{alt completion proving machine}, $S$ is Noetherian, has completion $T$, and is  a RLR.
 
Next we will show that $S$ contains its own factors. Let $r\in S$ and $r\in pT$ for some prime element $p$ in $T$. Then $r\in S_i$ for some $i$. Thus, $pu\in P(S_i)$ for some unit $u$ in $T$. Since $P(S_i)\subset S_{i+1}\subseteq S$,  we have that $pu\in S_{i+1}\subseteq S$ and we see that $S$ contains its own factors.

Finally, we will show that $S$ is excellent by showing that, for $P\in\spec S$, and $Q\in\spec T$ such that $Q\cap S=P$, $(T/PT)_{Q}$ is a RLR. 
Let $P\in\spec S$ and $Q\in \spec T$ such that $Q\cap S=P$. Suppose for contradiction that $Q/PT\in \sing(T/PT)=V(I/PT)$. Then $Q\supseteq {Q_j}\supseteq I$ for some minimal prime ideal $Q_j\in\spec T$ of $I$. Since ${Q_j}\supseteq I\supseteq PT$, ${Q_j/PT}\in\sing(T/PT)$.
% and $Q_j\cap S=P$. 
%because Q_j\cap S\subseteq Q\cap S=P and P=PT\cap S\subseteq Q_j\cap S, so Q_j\cap S=P
Furthermore, $P=PT\cap S\subseteq Q_j\cap S\subseteq Q\cap S=P$ so $Q_j\cap S=P$. 

 Note that $P$ is finitely generated, so let $P=(p_1,\ldots, p_m)S$. Choose $i$ so that $p_j\in S_i$ for all $j$. Then define $P'=P\cap S_i$ and note that $P'\in\spec S_i$. We will first show that $P'T=PT$. Observe,
 \[P'=P\cap S_i=(PT\cap S)\cap S_i = PT\cap S_i= (p_1,\ldots,p_m)T\cap S_i=(p_1,\ldots, p_m)S_i\]
 so then $P'T = ((p_1,\ldots, p_m)S_i)T=(p_1,\ldots, p_m)T=PT$. 
 Thus $T/PT=T/P'T$, and so since $Q_j/PT\in\sing(T/PT)$, we have ${Q_j/P'T}\in\sing(T/P'T)$. Next we will show that $Q_j=PT$.
 Since $Q_j\cap S=P$, we have $Q_j\cap S_i=P\cap S_i=P'$. So then, $Q_j\in\bigcup_{P'\in\spec S_i}\{Q_j: {Q_j}\in\min I \text{ for $I$ where } V(I/P'T)=\sing(T/P'T)\}$. This means that for some generating set for $Q_j$, $\{q_1,\ldots, q_\ell\}\subseteq T$, we have $\{q_k\}\subset G(S_i)\subseteq S_{i+1}$.
 Thus $Q_j\cap S_{i+1}=(q_1,\ldots, q_\ell)T\cap S_{i+1}=(q_1,\ldots, q_\ell)S_{i+1}$. Since $S\supseteq S_{i+1}$, $Q_j\cap S\supseteq Q_j\cap S_{i+1}=(q_1,\ldots, q_\ell)S_{i+1}$. 
 Recall that $Q_j\cap S=P$, so 
 \[PT=(Q_j\cap S)T\supseteq ((q_1,\ldots, q_\ell)S_{i+1})T=(q_1,\ldots, q_\ell)T=Q_j.\]
 We already know that $Q_j\supseteq PT$, thus $Q_j=PT$. Then  $(T/PT)_{Q_j}=(T/PT)_{PT}$ is a field (and hence a regular local ring). However $Q_j\in\sing(T/PT)$, so $(T/PT)_{Q_j}$ is not a regular local ring, a contradiction. 
 It follows that $Q/PT\notin \sing(T/PT)$ and so $(T/PT)_Q$ is a regular local ring. 
\end{proof}
\end{theorem}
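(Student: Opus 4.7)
The plan is to build $S$ as the union of a countable ascending chain $S_0 \subseteq S_1 \subseteq \cdots$ of countable local subrings of $T$, each with completion $T$. I would start with $S_0$ equal to the localization of $R_0$ at $R_0 \cap M$, which is already local with completion $T$ and is an RLR. Given $S_i$, I would construct $S_{i+1}$ by three successive enrichments. First, for each nonzero nonunit $s \in S_i$ fix one factorization of $s$ in $T$ (possible since $T$ is a UFD) and adjoin all its prime factors; since $S_i$ is countable, only countably many elements are added, and this step is designed to force $S$ to contain all its factors. Second, for every $P \in \spec S_i$, use Lemma \ref{singular locus is closed} on the excellent ring $T/PT$ to write $\sing(T/PT) = V(I/PT)$, and adjoin generators of each minimal prime $Q_j$ of $I$; by Lemma \ref{countable min primes} the total collection is countable. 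Third, apply Lemma \ref{close up ideals} to the polynomial extension formed in the previous two steps to obtain a countable local ring $S_{i+1}$ with completion $T$ and $IT \cap S_{i+1} = IS_{i+1}$ for every finitely generated $I$. Finally, set $S = \bigcup_i S_i$.

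Countability of $S$, the inclusions $R_0 \subseteq S \subseteq T$, and quasi-locality at $S \cap M$ are immediate from the construction. To show that $S$ is Noetherian with $\widehat S = T$ and RLR, I would verify $IT \cap S = IS$ for every finitely generated ideal $I$ of $S$ by choosing $\ell$ large enough that a generating set of $I$ and any witness element both lie in $S_\ell$, using that the identity already holds in $S_\ell$, then invoke Corollary \ref{alt completion proving machine}. That $S$ contains all its factors follows directly from the first enrichment.

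The main work, and the step I expect to be the principal obstacle, is verifying excellence by means of Lemma \ref{alt def of excellent}. Given $P \in \spec S$ and $Q \in \spec T$ with $Q \cap S = P$, I want $Q/PT \notin \sing(T/PT)$. Suppose for contradiction that it lies there; writing $\sing(T/PT) = V(I/PT)$ via Lemma \ref{singular locus is closed}, I may replace $Q$ by a minimal prime $Q_j$ of $I$ with $Q_j \subseteq Q$ and observe $Q_j \cap S = P$ by squeezing. Since $P$ is finitely generated, a generating set $p_1, \ldots, p_m$ of $P$ lies in some $S_i$; setting $P' = P \cap S_i$, a contraction-extension computation using $PT \cap S_i = P \cap S_i$ shows $P'T = PT$, so $\sing(T/P'T) = \sing(T/PT)$ and $Q_j$ is one of the primes whose generators were adjoined during the second enrichment at stage $i$. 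Hence a generating set $q_1, \ldots, q_\ell$ for $Q_j$ lies in $S_{i+1}$, and the identity $Q_j T \cap S_{i+1} = (q_1, \ldots, q_\ell) S_{i+1}$ holding in $S_{i+1}$ allows me to push up to $S$ and conclude $Q_j = (Q_j \cap S)T = PT$. Then $(T/PT)_{Q_j}$ is a field, contradicting $Q_j \in \sing(T/PT)$.

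The delicate point is the second enrichment, which must anticipate all $P \in \spec S$ — not just those in $S_i$ — by exploiting the fact that each such $P$ is already ``visible'' in some $S_i$ through its generators, and by using that the minimal primes of the singular-locus ideal are finite in number and captured by the completion condition $IT \cap S_{i+1} = I S_{i+1}$ when contracted from $T$. The rest of the proof is bookkeeping about when chosen generators enter the chain.
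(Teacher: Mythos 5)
Your proposal follows the paper's construction and argument essentially line for line: same choice of $S_0$, same pair of enrichment steps (adjoining prime factors and adjoining generators of the minimal primes of the singular-locus ideals), same closure via Lemma \ref{close up ideals}, same verification of $IT\cap S=IS$ for the union, and the same contradiction argument for excellence by passing to a minimal prime $Q_j$ of the singular-locus ideal, contracting to $P'=P\cap S_i$, and showing $Q_j=PT$. The approach and the key steps match the paper's proof.
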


\section{Definitions and Lemmas}\label{definitions and lemmas}

In the previous section we constructed an excellent, countable, RLR, $(S, S\cap M)$, with completion $T$ that contains all its own factors. For the remainder of this paper, $S$ refers to this ring. From $S$, we will construct an uncountable strictly ascending chain of rings such that, for any prime ideal $P\in\spec T$, $P$ intersects nontrivially with each ring if and only if $P$ intersects nontrivially with $S$. 
To accomplish this, we introduce the following definitions inspired by the definition of $W$-subrings by W. Zhu in her senior thesis at Williams College. 

\begin{definition}
A ring $R$ is an $S^\star$-subring of $T$ if  $S\subseteq R\subseteq T$ and, for any $P\in \spec T$, if $P\cap S=(0)$ then $P\cap R=(0)$. 
\end{definition}

\begin{definition} \label{CS^star-subring} 
A $CS^\star$-subring, $R$, of $T$, is a countable quasi-local $S^\star$-subring of $T$ with maximal ideal $R\cap M$. 
\end{definition}

Note that $S$ is an example of a $CS^\star$-subring of $T$. Furthermore, we will show that the union of $S^\star$-subrings of $T$ is an $S^\star$-subring as well. Let $R=\cup_\alpha R_\alpha$ where every $R_\alpha$ is an $S^\star$-subring of $T$. Then clearly $S\subseteq R\subseteq T$. Suppose $P\in\spec T$ such that $P\cap S=(0)$ and let $r\in P\cap R$. Then $r\in R_\alpha$ for some $\alpha$, so $r\in P\cap R_\alpha=(0)$ so $r=0$ and thus $P\cap R=(0)$, as desired. Note if each $R_\alpha$ is quasi-local with maximal ideal $R_\alpha\cap M$, then $R$ will be quasi-local with maximal ideal $R\cap M$. Thus the countable union of $CS^\star$-subrings of $T$ is also a $CS^\star$-subring of $T$ and the uncountable union of $CS^\star$-subrings is a quasi-local $S^\star$-subring. 

\begin{lemma}\label{qTcapS=quS}
Let $R$ be an $S^\star$-subring of $T$, with $q$ a prime element of $T$ such that $qT\cap R\neq (0)$. Then, for some unit $u$ in $T$, $qu\in S$, and $qT\cap S=quS$.  
\end{lemma}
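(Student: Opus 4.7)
The plan is to leverage the two key properties of $S$ established in Theorem \ref{ring S}: that $S$ contains all its factors and that $\widehat{S} = T$ (so finitely generated ideals of $S$ satisfy $IT \cap S = IS$ by faithful flatness). The $S^\star$-subring hypothesis on $R$ exists precisely to push information about intersections with $R$ back down to intersections with $S$, after which everything becomes a statement internal to $S$ and $T$.

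First I would observe that since $q$ is a prime element of $T$, the ideal $qT$ is a prime ideal of $T$. The defining property of an $S^\star$-subring says that any prime of $T$ with trivial intersection with $S$ must have trivial intersection with $R$. Taking the contrapositive and applying it to $P = qT$, the hypothesis $qT \cap R \neq (0)$ forces $qT \cap S \neq (0)$. Choose any nonzero $s \in qT \cap S$; then $s$ is a nonzero element of $S$ lying in $pT$ for the prime element $p = q$ of $T$, so Theorem \ref{ring S} (that $S$ contains all its factors) yields a unit $u \in T$ with $qu \in S$. This establishes the first conclusion.

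For the equality $qT \cap S = quS$, I would argue as follows. Since $u$ is a unit of $T$, we have $qT = (qu)T$, so $qT \cap S = (qu)T \cap S$. Now $(qu)S$ is a principal, hence finitely generated, ideal of $S$, and by the remark following Corollary \ref{alt completion proving machine}, the faithful flatness of $T = \widehat{S}$ over $S$ gives $IT \cap S = IS$ for every finitely generated ideal $I$ of $S$. Applying this with $I = (qu)S$ yields $(qu)T \cap S = (qu)S = quS$, which combined with the previous equality gives $qT \cap S = quS$.

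I do not anticipate any serious obstacle here: the lemma is essentially a bookkeeping exercise that combines the $S^\star$-subring definition (to pass from $R$ to $S$), the "contains all its factors" property of $S$ (to replace the prime $q$ of $T$ with an associate lying in $S$), and the faithful flatness $\widehat{S}=T$ (to compute the contraction of a principal ideal of $T$). The only small care needed is to apply the factor-extraction property to an element of $qT \cap S$ rather than to the original element of $qT \cap R$, since a priori nothing is known about factorizations of elements of $R$ inside $S$.
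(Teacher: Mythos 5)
Your proof is correct and follows essentially the same path as the paper's: pass from $qT\cap R\neq(0)$ to $qT\cap S\neq(0)$ via the $S^\star$-subring definition, apply the "contains all its factors" property to a nonzero element of $qT\cap S$ to get the unit $u$ with $qu\in S$, and conclude $qT\cap S = quT\cap S = quS$ using $\widehat{S}=T$. The only difference is that you spell out the intermediate steps a bit more explicitly, which is fine.
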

\begin{proof}
Since $R$ is an $S^\star$-subring of $T$ and $qT\in \spec T$ and $qT\cap R\neq (0)$, we have $qT\cap S\neq (0)$. Let $s\in qT\cap S$ with $s\neq 0$. Since $s\in qT$ and $S$ contains all its factors, for some unit $u$ in $T$, $qu\in S$. Now since $S$ has completion $T$, $qT\cap S=quT\cap S=quS$.
\end{proof}

Next is a lemma about factorizations of nonzero non-units in $S^\star$-subrings of $T$.

\begin{lemma}\label{r=cd}
Let $(R,R\cap M)$ be an $S^\star$-subring of $T$ and let $r\in R\cap M$. Then there exists $c\in (x_1,\ldots,x_n)S=S\cap M$ and $d$ is a unit in $T$ satisfying $r=cd$.
\end{lemma}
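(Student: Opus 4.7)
The plan is to exploit the fact that $T$ is a UFD together with Lemma \ref{qTcapS=quS} to ``absorb'' the unit parts of a $T$-factorization of $r$ into $S$. First I would dispose of the trivial case $r=0$ by taking $c=0$ and $d=1$. So assume $r$ is a nonzero non-unit of $T$. Since $T$ is a UFD, write $r=p_1p_2\cdots p_k$ where each $p_i$ is a prime element of $T$. Because $T$ is local with maximal ideal $M$, each prime $p_i$ is a non-unit and therefore lies in $M$.

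Next, for each $i$ the element $r$ witnesses that $p_iT\cap R\neq (0)$, since $r\in p_iT$ and $r\in R\setminus\{0\}$. By Lemma \ref{qTcapS=quS}, for each $i$ there is a unit $u_i\in T$ with $p_iu_i\in S$. Moreover $p_iu_i\in p_iT\subseteq M$, so $p_iu_i\in S\cap M$. Now set
\[
c=(p_1u_1)(p_2u_2)\cdots(p_ku_k)\qquad\text{and}\qquad d=(u_1u_2\cdots u_k)^{-1}.
\]
Then $c$ is a product of elements of $S\cap M$ (times other elements of $S$), hence lies in the ideal $S\cap M=(x_1,\ldots,x_n)S$, while $d$ is a product of units of $T$ inverted, so $d$ is a unit of $T$. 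A direct calculation gives $cd=p_1\cdots p_k=r$, completing the proof.

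There is no real obstacle here; the only subtle point is to check that every prime factor $p_i$ of $r$ in $T$ actually satisfies the hypothesis of Lemma \ref{qTcapS=quS}, which is immediate from $0\neq r\in p_iT\cap R$, and that one may legitimately record $S\cap M=(x_1,\ldots,x_n)S$, which follows because $S$ is a Noetherian subring of $T$ with $R_0\subseteq S$ and $\widehat S=T$, so $(x_1,\ldots,x_n)T\cap S=(x_1,\ldots,x_n)S$ by the ideal-contraction property used throughout Section \ref{construction of base ring}.
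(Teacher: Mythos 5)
Your proposal is correct and follows essentially the same path as the paper's proof: handle $r=0$ separately, factor $r$ into primes of $T$, invoke Lemma \ref{qTcapS=quS} on each prime factor to obtain $p_iu_i\in S$, and set $c=\prod p_iu_i$, $d=\prod u_i^{-1}$. The only cosmetic difference is that you argue $c\in S\cap M$ via the ideal property of $S\cap M$ and then identify $S\cap M=(x_1,\ldots,x_n)S$, while the paper reaches the same conclusion directly via $c\in(x_1,\ldots,x_n)T\cap S=(x_1,\ldots,x_n)S$.
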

\begin{proof}
First, if $r=0$, then $c=0$ and $d=1$ satisfy the desired conditions. Next, suppose $r\neq 0$. 
Since $r\in R\cap M\subseteq M$, $r$ is not a unit in $T$, so we can factor $r$ into primes in $T$ to obtain $r = p_1\cdots p_k$ for primes $p_i$ in $T$. For every $p_i$, $r\in p_iT\cap R$ and so by Lemma \ref{qTcapS=quS}, for some unit $u_i$ in $T$, $p_iu_i\in S$. Let $c=p_1u_1\cdots p_ku_k$ and $d=u_1^{-1}\cdots u_k^{-1}$. Then $c\in (x_1,\ldots, x_n)T\cap S=(x_1,\ldots, x_n)S$ and $d$ is a unit in $T$ such that $r=cd$.
\end{proof}

\section{The Construction} \label{construction of B}

In the following lemmas and propositions, we construct an ascending chain of $CS^\star$-subrings of $T$, starting from $S$, by adjoining power series, $u$, of the form specified below. In particular, upon adjoining each $u$, we not only retain the properties of $CS^\star$-subrings of $T$ but add new elements from $T$. The uncountable union of these rings, $A$, will be an uncountable $S^\star$-subring of $T$. To $A$, we adjoin elements so that the resulting $S^\star$-subring, $B$, of $T$ satisfies $bT\cap B=bB$ for any $b\in B$. Using this property of $B$ we show that $B$ is an excellent, uncountable, RLR with a countable spectrum. 
 Several of the definitions and lemmas in this section were inspired by ideas in W. Zhu's senior thesis at Williams College and work by C. Colbert in \cite{Colbert}.

Consider $u$ of the form
\begin{equation}\label{form}
u = 1 + A_1z_1 + A_2z_1z_2 + \cdots + A_kz_1z_2\cdots z_k+\cdots, 
\end{equation} 
where $A_i \in (x_1,\ldots, x_n)R$ and $z_i \in (x_1,\ldots, x_n)S$, for some $CS^\star$-subring, $R$, of $T$. Notice that, since $A_i,z_i\in M$, the $k$th term in the series is in $M^{k}$ for $k\geq 2$, so this series converges in $T$. So $u$ is, in fact, a unit in $T$. Now define,
 $$M_k = 1 + A_1z_1 + \cdots + A_{k-1}z_1\cdots z_{k-1}\text{ and }K_{k}= A_{k}z_{1}\cdots z_{k-1} + A_{k+1}z_{1}\cdots z_{k-1}z_{k+1}+ \cdots. $$ We can then express $u$ as $ M_{k} + z_kK_{k}$ for any $k\geq 1$. 
The next lemma demonstrates how we adjoin such an element $u$ to a $CS^\star$-subring, $R$, of $T$ so that the resulting ring is a $CS^\star$-subring of $T$. We do this by algorithmically choosing the values of $z_{i}$.

\begin{lemma}\label{R_n is a CS^star-subring}
Let $R$ be a $CS^\star$-subring of $T$. Then for any $A_i\in (x_1,\ldots, x_n)R$ satisfying the property that whenever $i>j$ there exists $k$ such that $A_i\in M^{k}$ and $A_j\notin M^{k}$, there exist $z_i \in (x_1,\ldots, x_n)S$ such that, if $u \in T$ is of the form,
$$u= 1 + A_1z_1 + A_2z_1z_2 + \cdots + A_kz_1z_2\cdots z_k+\cdots, $$
then $R[u]_{R[u]\cap M}$ is a $CS^\star$-subring of $T$.
\end{lemma}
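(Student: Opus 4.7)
My plan is to construct $z_1, z_2, \ldots$ one at a time via a diagonalization over the countable set $R[X]$, establishing the following factorization property: every nonzero element $f(u)$ of $R[u]$ can be written as $r\cdot v$ with $r\in R$ nonzero and $v$ a unit in $T$. Granted this, the $S^\star$ property for $R[u]_{R[u]\cap M}$ follows quickly: if $P\in\spec T$ satisfies $P\cap S=(0)$ and $0\neq\alpha\in P\cap R[u]_{R[u]\cap M}$, then clearing the denominator (which is a unit in $T$) gives some nonzero $f(u)\in P\cap R[u]$, and the factorization forces $r\in P\cap R$; since $R$ is itself an $S^\star$-subring, this gives $r=0$, a contradiction. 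Countability of $R[u]_{R[u]\cap M}$, its containment in $T$, and the identification of its maximal ideal with $R[u]_{R[u]\cap M}\cap M$ are all routine.

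The computational engine behind the factorization is the identity, valid for every $k\geq 1$,
$$f(u)=f(M_k)+(u-M_k)g_f,\qquad u-M_k=z_1\cdots z_k\tau_k,\qquad \tau_k=A_k+A_{k+1}z_{k+1}+\cdots\in M,$$
where $g_f\in T$ comes from the factor theorem, combined with Lemma~\ref{r=cd} applied to $f(M_k)\in R$. If $f(M_k)$ is a unit in $R$, then $f(u)\equiv f(M_k)\pmod{M}$ is already a unit in $T$ and $f(u)=1\cdot f(u)$ gives the desired factorization. If instead $f(M_k)\in R\cap M$ is nonzero, Lemma~\ref{r=cd} writes $f(M_k)=c_kd_k$ with $c_k\in S\cap M$ nonzero and $d_k$ a unit in $T$; choosing $z_k=c_kx_1$ then yields
$$f(u)=c_kd_k+z_1\cdots z_{k-1}c_kx_1\tau_kg_f=c_k\bigl(d_k+z_1\cdots z_{k-1}x_1\tau_kg_f\bigr),$$
and the parenthesized term is a unit (since $d_k$ is a unit and $x_1\tau_k\in M$), so $f(u)=c_k\cdot(\text{unit})$ with $c_k\in S\subseteq R$ nonzero.

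To handle every polynomial simultaneously I enumerate $R[X]=\{f_1,f_2,\ldots\}$ and maintain a queue of pending polynomials. At step $k$, I append $f_k$ and scan the queue in order for the first $p$ with $p(M_k)\neq 0$. If found, I pick $z_k$ exactly as above (either $z_k=c_kx_1$ via Lemma~\ref{r=cd} on $p(M_k)$, or $z_k=x_1$ when $p(M_k)$ is a unit in $R$) and strike $p$ from the queue; otherwise I set $z_k=x_1$ by default. In every case $z_k\in(x_1,\ldots,x_n)S$, and convergence of the series for $u$ in $T$ is immediate from the strictly increasing $M$-orders of the $A_i$ together with $z_1\cdots z_k\in M^k$.

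The main obstacle lies in verifying that this diagonalization actually handles every polynomial with nonzero $u$-image. For this I rely on two facts. First, if $f(u)\neq 0$ then $f(u)\in M^d\setminus M^{d+1}$ for some finite $d$, and since $f(u)-f(M_k)\in M^{k+1}$ we have $f(M_k)\neq 0$ for every $k\geq d$. Second, only finitely many polynomials sit ahead of $f$ in the queue when $f$ is enqueued, and each can be struck at most once, so after finitely many steps no polynomial ahead of $f$ is ever chosen again. Combining these, for all sufficiently large $k$ the only way $f$ can fail to be struck is $f(M_k)=0$, which is impossible for $k\geq d$; so $f$ is eventually struck, producing the factorization $f(u)=c_k\cdot v$. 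Conversely, any polynomial never struck must have $f(M_k)=0$ for all large $k$, whence $f(u)=\lim_k f(M_k)=0$, so no nonzero element of $R[u]$ evades the factorization property.
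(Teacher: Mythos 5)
Your proof is correct and follows essentially the same strategy as the paper's --- expand $f(u)$ around $f(M_k)$, factor $f(M_k)$ via Lemma~\ref{r=cd} to extract a nonzero element of $S$, and diagonalize over the countable set $R[X]$ to choose the $z_k$'s --- but there is one worthwhile variant. The paper guarantees that $G_i(M_k)\neq 0$ eventually by first proving that the $M_i$'s are pairwise distinct (this is precisely where the increasing-order hypothesis on the $A_i$'s is used) and then appealing to the finiteness of the root set of the polynomial $G_i$ in the domain $R$. You instead observe that $f(u)-f(M_k)\in M^{k+1}$, so that $f(M_k)\neq 0$ once $k$ exceeds the $M$-adic order of $f(u)$; this sidesteps the distinctness argument entirely and in fact shows the hypothesis on the $A_i$'s plays no role in this lemma (it is needed later, in Lemma~\ref{pick-u}, to manufacture uncountably many distinct $u$'s). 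Your queue-based scheduling is a cosmetic rewrite of the paper's two-index traversal, and both ensure every $f$ with $f(u)\neq 0$ is eventually handled. Two minor points: the extra factor $x_1$ in $z_k=c_kx_1$ is harmless but redundant, since $c_k$ already lies in $(x_1,\ldots,x_n)S$ and $\tau_k$ already lies in $M$; and the increasing orders of the $A_i$'s are likewise not needed for convergence of the series defining $u$, since $z_1\cdots z_k\in M^k$ already puts the $k$th term in $M^{k+1}$.
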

 
\begin{proof}
First we define the $z_i$'s and then show that the resulting ring is a $CS^\star$-subring of $T$. Define $R''=R[X]$ where $X$ is an indeterminate. Since $R$ is a $CS^\star$-subring of $T$, $R$ is countable. Then $R''$, polynomials in $X$ over $R$, is also countable. Given this, we can enumerate the nonzero elements of $R''$ using the nonnegative integers. Consider the $i$th element in the well-order, $G_i(X)$. Substituting any $u$ of the form in equation \ref{form}, we have, for any $k\geq 1$,
\[G_i(u)=r_{\ell,i}u^\ell+\cdots +r_{1,i}u+r_{0,i}=r_{k,i}(M_{k}+z_kK_{k})^\ell+\cdots +r_{1,i}(M_{k}+z_kK_{k})+r_{0,i}\]
for $r_{m,i}\in R$. By binomial expansion, this becomes 
\begin{equation} \label{g_i}
G_{i}(u) = G_{i}(M_{k}) + z_{k}\left(\sum_{m= 1}^\ell r_{m,i}\sum_{j=1}^m\binom{m}{j}z_{k}^{j-1}K_{k}^j M_{k}^{m-j}\right).
\end{equation}

We will define the $z_i$'s recursively using $G_j(M_i)$ so that $z_i\in (x_1,\ldots, x_n)S$. Notice that, since each $z_i$ will be in $(x_1,\ldots, x_n)S$, the second term in (\ref{g_i}) is an element of $(x_1,\ldots, x_n)T$. This means that $G_i(u)\in (x_1,\ldots, x_n)T\iff G_i(M_{k})\in (x_1,\ldots, x_n)T$. Equivalently, $G_i(u)$ is a unit in $T$ if and only if $G_i(M_k)$ is a unit in $T$ for all $k\geq 1$. Since $A_i\in (x_1,\ldots, x_n)R$, we have that $M_i\in R$ and so $G_j(M_i)\in R$ for all $i,j\in \N$. 

Starting with $j=1$ and $i=1$, we will use $G_j(M_i)$ to define $z_i$ as follows:
\begin{enumerate}
\item If $G_j(M_i)$ is a unit, then let $z_{i}=x_1$ and use to $G_{j+1}(M_{i+1})$ to define $z_{i+1}$.
\item If $G_{j}(M_i)=0$, then let $z_{i}=x_1$ and use to $G_{j}(M_{i+1})$ to define $z_{i+1}$. 
\item If $G_j(M_i)$ is nonzero and not a unit, then since $G_j(M_i)\in R$, by Lemma \ref{r=cd} $G_j(M_i)=cd$ where $c\in (x_1,\ldots, x_n)S$ and $d$ is a unit in $T$. Since $G_j(M_i)\neq 0$, $c\neq 0$. Let $z_{i}=c$ and use $G_{j+1}(M_{i+1})$ to define $z_{i+1}$. 
\end{enumerate}

One consequence of the above definition is that $z_i\neq 0$ for all $i$ and that $z_i\in (x_1,\ldots, x_n)S$ for all $i$. Now that $u$ has been defined by specifying each $z_i$, define $R'=R[u]$. We will now show that $R'_{R'\cap M}$ is a $CS^\star$-subring of $T$. Notice that $R'$ is countable, and $S\subseteq R\subseteq R'$. Let $P$ be a prime ideal of $T$ with $P\cap S=(0)$. We will show that $P\cap R'=(0)$ by showing that, for any nonzero $r\in R'$, $r\notin P$. First, since $r$ is nonzero, $r=G_i(u)$ for some $i$. If $G_i(u)$ a unit in $T$, then $G_i(u)\notin P$. 

Next, suppose that $G_i(u)$ is not a unit. Then $G_i(M_k)$ is not a unit for all $k\in \N$. We will show that every $M_i$ is distinct. 
Suppose not, and that $M_{j}=M_{i}$, for some $i\neq j$. Without loss of generality, assume $i>j$. Then 
\[M_{i}-M_{j}= A_jz_1\cdots z_j+A_{j+1}z_1\cdots z_{j+1}+\cdots +A_{i-1}z_1\cdots z_{i-1}=0\]
Since every $z_i$ is nonzero, % and we have no zero divisors since \Q is an integral domain so $T$ is an integral domain 
\[A_j+A_{j+1}z_{j+1}+\cdots +A_{i-1}z_{j+1}\cdots z_{i-1}=0\]
Now choose $k$ such that $A_{j+1}\in M^k$ but $A_j\notin M^k$. Then 
\[A_{j+1}z_{j+1}+\cdots +A_{i-1}z_{j+1}\cdots z_{i-1}=-A_j\notin M^k\]
However $A_{j+1}\in M^k$, so $A_\ell\in M^k$ for every  $\ell\geq j+1$, thus 
\[A_{j+1}+\cdots +A_{i-1}z_{j+2}\cdots z_{i-1}\in M^{k}\]
a contradiction. Thus $M_i\neq M_j$ whenever $i\neq j$ and all the $M_i$'s are distinct.

 Since $0\neq G_i(u)\in R[u]$, $G_i(u)$ has at most $\deg(G_i)$ roots in $R$ because $R$ is an integral domain. Each $M_i\in R$ and all the $M_i$'s are distinct, thus there exists some $k\in \N$ such that $G_i(M_k)\neq 0$. Thus by case (3) in the algorithm above,  $G_i(M_k)=z_{k}d$ where $d$ is a unit. Substituting into (\ref{g_i}), we have 
\begin{equation}\label{factor G_i(u)}
G_{i}(u) = z_{k}\left(d+\sum_{m= 1}^{\ell} r_{m,i}\sum_{j=1}^m\binom{m}{j}z_{k}^{j-1}K_{k}^j M_{k}^{m-j}\right).
\end{equation}

Notice that $z_{k}\in S$ and $z_{k}\neq 0$, so then $z_{k}\notin P$ since $P\cap S=(0)$. Furthermore, since $K_k\in M$ and $d$ is a unit, $d+\sum_{m= 1}^{k+1} r_{m,i}\sum_{j=1}^m\binom{m}{j}z_{k}^{j-1}K_{k}^j M_{k}^{m-j}$ is a unit and thus not in $P$. Since $P$ is prime, this means that $G_i(u)\notin P$. Thus we have shown that $P\cap R'=(0)$. Finally, localizing $R'$ at $R'\cap(x_1,\ldots, x_n)T$ yields a $CS^\star$-subring of $T$. 
\end{proof}

The next lemma shows that there exists a choice of $A_i\in (x_1,\ldots, x_n)R$ such that the ring, $R[u]_{R[u]\cap M}$, from Lemma \ref{R_n is a CS^star-subring} is not equal to $R$. We would like to thank C. Colbert for discussions that led to the main idea in this proof. 

\begin{lemma}\label{pick-u}
Given a $CS^\star$-subring, $R$, of $T$, there exists a $CS^\star$-subring, $R'$, of $T$, where $R\subsetneq  R'\subset T$. 
\begin{proof}
First, by Lemma \ref{R_n is a CS^star-subring}, for any $A_i\in (x_1,\ldots, x_n)R$ satisfying the property that whenever $i>j$ there exists $k\in\N$ such that $A_i\in M^k$ but $A_j\notin M^k$, there exists a $u\in T$ with $u=1+A_1z_1+A_2z_1z_2+\cdots$, with $z_i\in (x_1,\ldots,x_n)S$, such that $R[u]_{R[u]\cap M}$ is a $CS^\star$-subring of $T$. Let $A_i=x_1^{q(i)}$ where $q:\Z^+\rightarrow \Z^+$ is a strictly increasing function. Notice that $A_i\in (x_1,\ldots, x_n)R$ and that, for $i>j$, $A_i\in M^{q(i)}$ but $A_j\notin M^{q(i)}$. 
 We will show that there are uncountably many possible $u$'s. First, notice that by a diagonal argument there are uncountably many choices for the function $q$.  We will show that distinct choices for the function $q$ yield distinct $u$'s. Let $u_1=1+A_1z_1+\cdots $ and $u_2=1+B_1z'_1+\cdots$, where $A_i=x_1^{q(i)}$ and $B_i=x_1^{p(i)}$ where $p,q:\Z^+\rightarrow \Z^+$ are both strictly increasing. Suppose that $u_1=u_2$. We will show that $A_i=B_i$ and $z_i=z'_i$ for all $i$. 

First, notice that $M_1=1$. Since $z_1$ and $z'_1$ are both defined by the algorithm in the proof of Lemma \ref{R_n is a CS^star-subring} using $G_1(M_1)=G_1(1)$, they will be the same and nonzero. Then equating $u_1$ and $u_2$, we have $A_1+A_2z_2+\cdots=B_1+B_2z_2'+\cdots$, so then
\[x_1^{q(1)} +x_1^{q(2)}z_2+\cdots = x_1^{p(1)}+x_1^{p(2)}z_2'+\cdots  \]
Without loss of generality, suppose $q(1)\leq p(1)$, then 
\[1 +x_1^{q(2)-q(1)}z_2+\cdots = x_1^{p(1)-q(1)}+x_1^{p(2)-q(1)}z_2'+\cdots\]
Since $z_i\in (x_1,\ldots, x_n)S$ for all $i$ and $1\notin M$, the left hand side is not in $M$. Thus the right hand side is not in $M$ either, however all but the first term are in $M$, since $z'_i\in (x_1,\ldots, x_n)S$. Thus $x_1^{p(1)-q(1)}\notin M$, which implies $p(1)=q(1)$ and so $A_1=B_1$. Since the definition of $z_i$ (respectively $z_i'$) depends only on $A_j$ and $z_j$ (respectively $B_j$ and $z_j'$) for all $j<i$, and we have shown that $z_1=z_1'$ and $A_1=B_1$, we can show inductively that $z_i=z_i'$ and $A_i=B_i$ for all $i\geq 1$. Since there are uncountably many choices for the function $q$, there are uncountably many units $u$ such that $R[u]_{R[u]\cap M}$ is a $CS^\star$-subring of $T$. Since $R$ is a $CS^\star$-subring of $T$, it is countable, and thus there exists a $u\in T\backslash R$ such that $R[u]_{R[u]\cap M}$ is a $CS^\star$-subring of $T$. Hence $R'=R[u]_{R[u]\cap M}$ is the desired $CS^\star$-subring of $T$. 
\end{proof}
\end{lemma}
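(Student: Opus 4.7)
The plan is to exploit the freedom in choosing the sequence $(A_i)$ provided by Lemma \ref{R_n is a CS^star-subring}. Since $R$ is countable while $T$ is uncountable, it suffices to exhibit uncountably many distinct units $u \in T$ each of which produces a $CS^\star$-subring $R[u]_{R[u]\cap M}$; then at least one such $u$ must lie outside $R$, and $R' := R[u]_{R[u]\cap M}$ will be the desired strictly larger $CS^\star$-subring.

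To produce an uncountable family of candidate $u$'s, I would restrict to $A_i$ of a simple parametrized form, e.g. $A_i = x_1^{q(i)}$ for strictly increasing functions $q: \Z^+ \to \Z^+$. These trivially satisfy the hypothesis of Lemma \ref{R_n is a CS^star-subring} (for $i > j$, take $k = q(i)$). A standard diagonalization shows there are uncountably many strictly increasing $q$, so we obtain uncountably many candidate sequences $(A_i)$, each producing a unit $u_q \in T$ via the algorithm that defines the $z_i$'s.

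The main obstacle is showing that distinct $q$'s give rise to distinct $u$'s, since the $z_i$'s are chosen recursively and a priori could conspire to collapse different $A$-sequences onto the same $u$. I would handle this by induction on $i$, using the $M$-adic filtration on $T$. In the base case, note $M_1 = 1$ is independent of the sequence, so the algorithm in Lemma \ref{R_n is a CS^star-subring} assigns the same nonzero $z_1$ to any two candidates $u_1, u_2$. Equating $u_1 = u_2$ and dividing out $z_1$ gives $A_1 + A_2 z_2 + \cdots = B_1 + B_2 z_2' + \cdots$; since $A_2, B_2, z_2, z_2' \in M$, reducing modulo a suitable power of $M$ (or comparing lowest-degree terms in $x_1$) forces $A_1 = B_1$, i.e.\ $q(1) = p(1)$. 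For the inductive step, once $A_j = B_j$ and $z_j = z_j'$ for all $j < i$, the algorithm defining $z_i$ depends only on these quantities, so $z_i = z_i'$; and the same $M$-adic comparison applied to the tail series yields $A_i = B_i$.

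With the injectivity established, the uncountably many choices of $q$ produce uncountably many distinct units $u_q \in T$, each generating a $CS^\star$-subring $R[u_q]_{R[u_q] \cap M}$. Countability of $R$ forces at least one $u_q \notin R$, and the corresponding localization yields the strictly larger $CS^\star$-subring $R'$. The only delicate point in the writeup is justifying the $M$-adic comparison cleanly, since the $z_i$'s lie in $(x_1,\ldots,x_n)S \subseteq M$ but otherwise are not under tight control; however, because the $A_i$'s are pure powers of $x_1$, one can reduce everything to comparing $x_1$-orders of explicit terms, which sidesteps any interaction with the opaque $z_i$'s.
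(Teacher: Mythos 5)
Your proposal follows essentially the same approach as the paper: take $A_i = x_1^{q(i)}$ for strictly increasing $q$, observe there are uncountably many such $q$, prove injectivity of $q \mapsto u$ by induction (using that $M_1 = 1$ forces $z_1 = z_1'$, then comparing $x_1$-orders modulo $M$ to get $A_1 = B_1$, and so on), and conclude by countability of $R$ that some $u \notin R$ works. The argument is correct and mirrors the paper's proof in all essential details.
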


\begin{theorem}\label{ring A}
There exists an uncountable, quasi-local $S^\star$-subring of $T$, $(A,A\cap M)$.
\end{theorem}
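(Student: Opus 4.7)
The plan is to build $A$ as the union of a strictly ascending transfinite chain $\{R_\alpha\}_{\alpha<\omega_1}$ of $CS^\star$-subrings of $T$ indexed by the countable ordinals. Set $R_0=S$, which is a $CS^\star$-subring of $T$ by Theorem \ref{ring S} and the remark following Definition \ref{CS^star-subring}. At successor stages, apply Lemma \ref{pick-u} to $R_\alpha$ to obtain a $CS^\star$-subring $R_{\alpha+1}$ of $T$ with $R_\alpha\subsetneq R_{\alpha+1}\subset T$. At a limit ordinal $\lambda<\omega_1$, define $R_\lambda=\bigcup_{\alpha<\lambda}R_\alpha$.

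A routine transfinite induction shows that every $R_\alpha$ is a $CS^\star$-subring of $T$. The successor step is immediate from Lemma \ref{pick-u}. For a limit ordinal $\lambda<\omega_1$, the index set is countable and each $R_\alpha$ is countable, so $R_\lambda$ is a countable union of countable $CS^\star$-subrings; the remark after Definition \ref{CS^star-subring} records that such a union is again a $CS^\star$-subring of $T$ (with maximal ideal $R_\lambda\cap M$). Now set $A=\bigcup_{\alpha<\omega_1}R_\alpha$. By the same remark, the union of any collection of $S^\star$-subrings of $T$ is an $S^\star$-subring, and the union of a collection of quasi-local subrings each having maximal ideal $R_\alpha\cap M$ is itself quasi-local with maximal ideal $A\cap M$, so $A$ is a quasi-local $S^\star$-subring of $T$.

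To see that $A$ is uncountable, use the strictness of each inclusion to choose, for every $\alpha<\omega_1$, some $a_\alpha\in R_{\alpha+1}\setminus R_\alpha$; the assignment $\alpha\mapsto a_\alpha$ is injective into $A$, so $|A|\geq\aleph_1$. There is no real obstacle in this argument: Lemma \ref{pick-u} drives the successor step, and the previously established closure of $CS^\star$-subrings under countable unions handles the limit step. The only point requiring a moment of care is choosing an index set of uncountable cofinality, for which $\omega_1$ is the natural choice: it is long enough that the resulting union is uncountable, yet every proper initial segment is countable so that each $R_\alpha$ remains a $CS^\star$-subring rather than merely an $S^\star$-subring.
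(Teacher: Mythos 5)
Your proof is correct and takes essentially the same approach as the paper: the paper indexes the transfinite chain by a well-ordered uncountable set in which every element has countably many predecessors (i.e., order type $\omega_1$), applies Lemma \ref{pick-u} at successors, takes unions at limits, and concludes uncountability from the strict inclusions, just as you do.
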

\begin{proof}
There exists a well-ordered, uncountable set, $C$, such that every element of $C$ has only countably many predecessors. Let $0$ denote the minimal element of $C$. We will inductively define a $CS^\star$-subring of $T$, $S_c$, for every element $c\in C$. Define $S_0=S$. Let $0<c\in C$ and assume that $S_b$ has been defined for every $b<c$. If $c$ has a predecessor, $b\in C$, then define $S_c$ to be the $CS^\star$-subring obtained from Lemma \ref{pick-u} with $R=S_{b}$, so that $S_{b}\subsetneq S_c\subseteq T$. If $c$ is a limit ordinal, define $S_c=\cup_{b<c}S_b$. Then every $S_c$ is a $CS^\star$-subring of $T$. 

Let $A=\cup_{c\in C}S_c$. Since each $S_c$ is a $CS^\star$-subring of $T$, $A$ is a quasi-local $S^\star$ subring of $T$ with unique maximal ideal $A\cap M$.
Finally, notice that, for uncountably many $c\in C$, $c$ has a predecessor $b$ such that $S_b\subsetneq S_c$, so then $A$ is uncountable. Thus $(A,A\cap M)$ is an uncountable, quasi-local $S^\star$-subring of $T$. 
\end{proof}

Next, we adjoin elements to the ring $A$ such that, for the resulting ring, $B$, $bT\cap B=bB$ for all $b\in B$.

\begin{lemma}\label{IT cap A < IA'}
Given an uncountable $S^\star$-subring, $A$, of $T$, there exists an uncountable $S^\star$-subring, $A'$, of $T$, with $A'\supset A$, such that, for any principal ideal $I$ of $A$, $IT\cap A \subseteq IA'$. 
\end{lemma}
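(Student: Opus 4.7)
The plan is to adapt Lemma~\ref{add-ideal} to the uncountable setting while preserving the $S^\star$-property. For every pair $(a,c)$ with $a\in A\setminus\{0\}$ and $c\in aT\cap A$, the element $t_{a,c}:=c/a$ is well defined in $T$ because $T$ is a domain in which $a$ divides $c$. Let $\Omega$ denote the collection of all such pairs and set
\[A' \;=\; A\bigl[\,t_{a,c}:(a,c)\in \Omega\,\bigr]\]
as a subring of $T$. Then $A\subseteq A'\subseteq T$, so $A'$ is automatically uncountable, and the required principal ideal condition is built into the construction: given a principal ideal $I=aA$ of $A$, if $a=0$ the conclusion is trivial, and otherwise every $c\in aT\cap A$ is either zero or equals $a\cdot t_{a,c}\in aA'=IA'$.

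The main step is verifying that $A'$ remains an $S^\star$-subring of $T$. Fix $P\in\spec T$ with $P\cap S=(0)$; by the $S^\star$-property of $A$ we already have $P\cap A=(0)$, and we must upgrade this to $P\cap A'=(0)$. Given a nonzero $f\in A'$, choose finitely many pairs so that $f\in A[t_{a_1,c_1},\ldots,t_{a_k,c_k}]$, and write
\[f \;=\; \sum_{I}\alpha_{I}\, t_{a_1,c_1}^{i_1}\cdots t_{a_k,c_k}^{i_k},\qquad \alpha_I\in A,\]
where the sum is finite. Let $N_j$ denote the largest exponent of $t_{a_j,c_j}$ appearing and set $b=a_1^{N_1}\cdots a_k^{N_k}$. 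Using the defining relation $a_jt_{a_j,c_j}=c_j\in A$ monomial by monomial yields
\[bf \;=\; \sum_{I}\alpha_{I}\prod_{j=1}^{k}a_{j}^{N_j-i_j}c_{j}^{i_j}\;\in\; A.\]
Each $a_j\neq 0$, so $b\neq 0$ in the domain $T$. If $f\in P$, then $bf\in A\cap P=(0)$, forcing $bf=0$ in $T$ and hence $f=0$, a contradiction. Thus $P\cap A'=(0)$, as required.

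I expect no serious obstacle beyond this clearing-denominators identity, which works precisely because only finitely many $t_{a_j,c_j}$'s appear in any given $f$ and each corresponding $a_j$ is nonzero. In particular, no transfinite induction is required: the one-shot adjunction $A'=A[\,t_{a,c}:(a,c)\in\Omega\,]$ suffices, and later results can iterate this construction and then localize to produce the quasi-local ring $B$ of interest.
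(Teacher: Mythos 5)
Your proof is correct and rests on the same underlying algebraic idea as the paper's: clear the $t_{a,c}$'s by multiplying by a suitable power of the $a$'s, land in a ring already known to meet $P$ trivially, and conclude the element is zero. The difference is organizational. The paper well-orders $\Omega$ and adjoins one $t$ at a time by transfinite recursion, verifying the $S^\star$-property at each successor step by multiplying a polynomial in a single $t$ by $a^k$; you instead adjoin all the $t_{a,c}$'s in one shot and prove the $S^\star$-property directly for the union, which requires the slightly more general clearing-denominators identity $bf=\sum_I\alpha_I\prod_j a_j^{N_j-i_j}c_j^{i_j}\in A$ handling several $t_{a_j,c_j}$'s at once. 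Both work; yours is arguably cleaner since it avoids the transfinite machinery entirely, while the paper's version keeps the style consistent with its other constructions (e.g.\ Lemma~\ref{add-ideal}) and only ever has to track one adjoined element at a time.
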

\begin{proof}
Consider the set $\Omega=\{(I,c):I\text{ is a principal ideal of }A, c\in IT\cap A\}$. Well-order $\Omega$ with $0$ denoting its minimal element. We will inductively define subrings for each $(I,c)\in\Omega$. Let $S_0=A$. Given $0<\alpha\in\Omega$, where $S_\beta$ has been defined to be a $CS^\star$-subring of $T$ for every $\beta<\alpha$, we will define $S_\alpha$ as follows. If $\alpha$ is a successor ordinal with predecessor $\beta=(I,c)$, then, since $I$ is a principal ideal of $A$, $I=aA$ for some $a\in A$. If $a=0$, then define $S_\alpha=S_\beta$. Otherwise, $c\in IT\cap A=aT\cap A$ implies that $c=at$ for some $t\in T$. Define $S_\alpha=S_\beta[t]$. Notice that $c\in IS_\alpha$. If $\alpha$ is a limit ordinal, define $S_\alpha=\cup_{\beta<\alpha}S_\beta$. 

 We will now show inductively that $S_\alpha$ is a $S^\star$-subring for every $\alpha\in\Omega$. Notice that $S_0=A$ is an $S^\star$-subring by assumption. Next, suppose that $S_\beta$ is a $S^\star$-subring for every $\beta<\alpha$. We will show that $S_\alpha$ is as well. By construction, $S\subseteq A\subseteq S_\alpha\subseteq T$ for every $\alpha\in\Omega$ so $S_\alpha$ is uncountable and $S\subseteq S_\alpha\subseteq T$. Thus we need only check that, for any $P\in\spec T$ such that $P\cap S=(0)$, we have $P\cap S_\alpha=(0)$.

If $\alpha$ is a successor ordinal with predecessor $\beta=(I,c)$, either $I=(0)$ or $I=aA$ for $a\neq 0$. In the case where $I=(0)$, $S_\alpha=S_\beta$ and so $S_\alpha$ is an $S^\star$-subring of $T$. In the case where $I=aA$ for some nonzero $a\in A$, then $S_\alpha=S_\beta[t]$ with $t\in T$ such that $c=at\in IT=aT$. Consider $P\in \spec T$ such that $P\cap S=(0)$. Let $g\in P\cap S_\alpha$. Then $g=r_kt^k+\cdots+r_1t+r_0$ for $r_i\in S_\beta$. Then $a^kg\in P\cap S_\beta =(0)$. Since $a\neq 0$, this implies that $g=0$ and so $P\cap S_\alpha=(0)$ as desired. 
 Next, if $\alpha$ is a limit ordinal, then $S_\alpha=\cup_{\beta<\alpha}S_\beta$. Since each $S_\beta$ is an $S^\star$-subring of $T$, so is $S_\alpha$. Thus $S_\alpha$ is an $S^\star$-subring of $T$ for every $\alpha\in\Omega$. 
 
 Now define $A'=\cup_{\alpha\in\Omega}S_\alpha$. We will show that this is the desired ring. First, note that $S\subset A\subset A'\subset T$. Since $A$ is uncountable, so is $A'$. Next, since each $S_\alpha$ is an $S^\star$-subring of $T$, so is $A'$. Finally, consider some principal ideal $I$ of $A$. Let $c\in IT\cap A$. Then $\alpha=(I,c)\in \Omega$ and so $c\in IS_{\alpha+1}\subseteq IA'$. Thus $IT\cap A\subseteq IA'$ as desired. 
\end{proof}

\begin{theorem}\label{ring B}
There exists an uncountable, quasi-local $S^\star$-subring, $(B,B\cap M)$, such that, for every principal ideal $I$ of $B$, $IT\cap B=IB$. 
\end{theorem}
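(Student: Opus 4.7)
The plan is to mimic the countable ``ideal closure'' construction from Lemma \ref{close up ideals}, but at the uncountable level, using Lemma \ref{IT cap A < IA'} as the basic step and then finishing with a localization. Starting from the uncountable quasi-local $S^\star$-subring $A$ given by Theorem \ref{ring A}, I would inductively build $A = A_0 \subseteq A_1 \subseteq A_2 \subseteq \cdots$ by letting $A_{i+1}$ be the uncountable $S^\star$-subring produced by Lemma \ref{IT cap A < IA'} applied to $A_i$. Each $A_i$ is an uncountable $S^\star$-subring, and for any principal ideal $I$ of $A_i$ one has $IT \cap A_i \subseteq IA_{i+1}$.

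Then I would set $B' = \bigcup_{i \geq 0} A_i$. Since any union of $S^\star$-subrings is an $S^\star$-subring (observed just after Definition \ref{CS^star-subring}) and $A_0 \subseteq B'$, the ring $B'$ is an uncountable $S^\star$-subring of $T$. To make it quasi-local, define $B = B'_{B' \cap M}$, the localization at the prime ideal $B' \cap M$. Because $T$ is local, any element of $B'$ lying outside $M$ is a unit in $T$, which forces (i) $B$ to sit inside $T$, (ii) $B$ to be quasi-local with maximal ideal $B \cap M$, and (iii) $B$ to remain an $S^\star$-subring: if $P \in \spec T$ satisfies $P \cap S = (0)$ and $a/s \in P \cap B$, then $s$ is a unit in $T$, so $a \in P \cap B' = (0)$, forcing $a/s = 0$.

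It remains to verify $IT \cap B = IB$ for every principal ideal $I = bB$. Units in $B$ may be absorbed into the generator, so I may assume $b \in B'$. Given $c \in bT \cap B$, write $c = a/s$ with $a \in B'$ and $s \in B' \setminus (B' \cap M)$. Then $a = cs \in bT \cap B'$. Choosing $i$ large enough that $a, b \in A_i$, Lemma \ref{IT cap A < IA'} gives $a \in bT \cap A_i \subseteq bA_{i+1} \subseteq bB'$, so $a = bb''$ for some $b'' \in B'$, whence $c = b(b''/s) \in bB$. The reverse inclusion is immediate.

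The main point to be careful about is the interaction between the iterated lemma and the final localization: both operations must preserve the $S^\star$ property and must not destroy the principal-ideal closure. In both places the argument rests on the fact that every denominator used in passing from $B'$ to $B$ is a unit in $T$, which allows primes of $T$ and principal-ideal generators to pass through the localization cleanly. Apart from this bookkeeping, no new idea beyond Lemma \ref{IT cap A < IA'} is required.
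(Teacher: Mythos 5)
Your proposal is correct and follows essentially the same route as the paper: iterate Lemma \ref{IT cap A < IA'} countably many times starting from $A$, take the union $B'$, and localize at $B'\cap M$. The only difference is that you spell out the localization bookkeeping (that $B$ still sits inside $T$, remains an $S^\star$-subring, and inherits principal-ideal closure) explicitly, whereas the paper simply appeals to ``the same argument as in the proof of Lemma~\ref{close up ideals}.''
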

\begin{proof}
Beginning with $B_0=A$, the ring obtained from Theorem \ref{ring A}, inductively define $B_{i+1}$ as the ring obtained from Lemma \ref{IT cap A < IA'}, such that, for any principal ideal $I$ of $B_i$, $IT\cap B_i\subseteq IB_{i+1}$. Then let $B'=\cup_{i=0}^\infty B_i$. As each $B_i$ is an uncountable $S^\star$-subring, so is $B'$. Next, consider some principal ideal $I$ of $B'$. We have that $I=bB$ for some $b\in B'$. Given $c\in bT\cap B'$, $c\in B'$, so for some $i$, we have that $b,c\in B_i$. By construction, $c\in bT\cap B_i\subseteq bB_{i+1}\subseteq IB'$, so then $IT\cap B'\subseteq IB'$. The other direction follows trivially, and so we have shown that $IT\cap B'=IB'$. Now define $B=B'_{B'\cap M}$. By the same argument as in the proof of Lemma \ref{close up ideals}, for every finitely generated ideal $I$ of $B$, $IT\cap B=IB$. Thus $(B,B\cap M)$ is the desired uncountable, quasi-local $S^\star$-subring of $T$. 
\end{proof}

\section{Properties of the Final Ring}\label{properties of B}

In this section, $B$ refers to the ring constructed in Theorem \ref{ring B} of the previous section. Recall that $(B,B\cap M)$ is an uncountable, quasi-local $S^\star$-subring of $T$ such that, for every principal ideal $I$ of $B$, $IT\cap B=IB$. In this section we will use the properties of $B$ to show that $B$ is Noetherian and has completion $T$, and then demonstrate that $B$ is excellent and a RLR.

\begin{lemma}\label{extended ideals}
Finitely generated ideals of $B$ are extended from $S$, i.e. for any finitely generated ideal $I$ of $B$,  $I=(p_1,\ldots, p_k)B$ for $p_i\in S$. 
\begin{proof}
Let $I$ be a finitely generated ideal of $B$. Then $I=(b_1,\ldots, b_k)B$ for $b_i\in B$. If $I=B$, then $I=1B$ and $1\in S$.
Thus suppose that $I$ is a proper  ideal and so the $b_i$'s are non-units. 

Since the $b_i$'s are not units, $b_i\in B\cap (x_1,\ldots, x_n)T$. By Lemma \ref{r=cd}, $b_i=p_iu_i$ where $p_i\in (x_1,\ldots, x_n)S$ and $u_i$ is a unit in $T$ for all $i=1,2,\ldots, k$. We will show that $u_i$ is also a unit in $B$. Notice that $b_i=p_iu_i\in p_iT\cap B$, and since $p_iT$ is a principal ideal, $p_iT\cap B=p_iB$. Hence $p_iu_i\in p_iB$, which shows that $u_i\in B$. Since the $p_i$'s and $b_i$'s are associates in $B$, $ I=(b_1,\ldots, b_k)B =(p_1,\ldots, p_k)B$, where $p_i\in S$, and so $I$ is extended from $S$.
\end{proof}
\end{lemma}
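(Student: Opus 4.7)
The plan is to use the factorization lemma from the previous section (Lemma \ref{r=cd}) together with the defining property of $B$ that $pT\cap B = pB$ for every principal ideal $pB$. The goal is to write each generator of a finitely generated ideal as an associate (in $B$, not just in $T$) of an element of $S$.

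First I would dispense with the trivial case: if $I = B$, then $I = 1 \cdot B$ and $1\in S$, so we are done. Hence assume $I = (b_1, \ldots, b_k)B$ is a proper ideal, which forces every $b_i$ to lie in the maximal ideal $B\cap M$. Next, for each $i$, I would apply Lemma \ref{r=cd} to obtain a factorization $b_i = p_i u_i$ with $p_i \in (x_1,\ldots,x_n)S$ and $u_i$ a unit in $T$. The candidate generators are then $p_1,\ldots,p_k \in S$, and the remaining task is to show that $p_iB = b_iB$.

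The crux of the argument, and the one place the specific property of $B$ is used, is showing that $u_i$ is actually a unit of $B$. I would observe that $b_i = p_i u_i \in p_i T \cap B$, and since $p_i B$ is a principal ideal of $B$, the hypothesis $p_i T\cap B = p_i B$ yields $b_i \in p_i B$. Thus $b_i = p_i v$ for some $v\in B$, and because $T$ is a domain and $p_i \neq 0$, we conclude $u_i = v\in B$. Finally, since $u_i$ is a unit in $T$ it lies outside $M$, hence outside $B\cap M$, and because $B$ is quasi-local with maximal ideal $B\cap M$, this forces $u_i$ to be a unit of $B$. Therefore $b_i$ and $p_i$ are associates in $B$, and
\[
I = (b_1,\ldots,b_k)B = (p_1,\ldots,p_k)B
\]
with each $p_i\in S$, as desired.

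I do not anticipate any serious obstacle here; the main subtlety is simply remembering that going from \emph{unit in $T$} to \emph{unit in $B$} requires both $u_i\in B$ (provided by the principal-ideal property of $B$) and $u_i \notin B\cap M$ (automatic from $u_i$ being a unit in $T$ and $B$ being quasi-local with maximal ideal $B\cap M$).
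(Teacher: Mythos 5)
Your proof is correct and follows essentially the same approach as the paper's: apply Lemma \ref{r=cd} to factor each generator as $b_i = p_i u_i$, use $p_iT\cap B = p_iB$ to place $u_i$ in $B$, and observe that $u_i \notin B\cap M$ forces $u_i$ to be a unit of $B$. You are slightly more explicit than the paper about the cancellation step and about why $u_i$ is a unit (not merely an element) of $B$, but the ideas and structure are identical.
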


\begin{theorem}\label{closed ideals of ring B}
For every finitely generated ideal $I$ of $B$, $IT\cap B=IB$. 
\end{theorem}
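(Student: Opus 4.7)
The plan is to reduce the finitely generated case to the principal case of Theorem~\ref{ring B} using Lemma~\ref{extended ideals} together with a sharp reading of Lemma~\ref{r=cd}.

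First, by Lemma~\ref{extended ideals}, I would write $I = (p_1, \ldots, p_k)B$ with $p_i \in S$, and set $J = (p_1, \ldots, p_k)S$, so that $IT = JT$ and $IB = JB$. The inclusion $IB \subseteq IT \cap B$ is trivial; for the reverse, take $b \in IT \cap B$. If some $p_i$ is a unit in $S$, then $IB = B$ and we are done, so I assume each $p_i \in S \cap M$. Then $b \in JT \subseteq M$, so $b \in B \cap M$. If $b = 0$ the conclusion is immediate; otherwise $b \in (B\cap M) \setminus \{0\}$.

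Next, I would apply Lemma~\ref{r=cd} to $b$ in the $S^\star$-subring $B$ to write $b = cd$ with $c \in S \cap M$ and $d$ a unit in $T$. The critical observation is that the proof of Lemma~\ref{r=cd} produces $c = b \cdot d^{-1}$, so $c$ and $b$ are associates in $T$; in particular $c \in bT \subseteq JT$. Combining $c \in S$ with the flatness identity $JT \cap S = JS$ (which holds since $S$ is Noetherian with $\widehat{S} = T$), I conclude $c \in J$.

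Since $c \in S \subseteq B$ is nonzero and $b \in cT \cap B$, the principal ideal case (Theorem~\ref{ring B}) forces $b \in cB$; as $T$ is a domain this means $d = b/c \in B$. Writing $c = \sum_{i=1}^{k} p_i a_i$ with $a_i \in S$ (possible since $c \in J$), we obtain
\[
b = cd = \sum_{i=1}^{k} p_i (a_i d),
\]
and each $a_i d \in B$ because $a_i \in S \subseteq B$ and $d \in B$. Hence $b \in (p_1, \ldots, p_k)B = IB$. The only mildly subtle step is upgrading Lemma~\ref{r=cd}'s conclusion from $c \in S \cap M$ to $c \in J$, which is exactly where the flatness relation $JT \cap S = JS$ (automatic from $\widehat{S} = T$) together with the fact that $c$ and $b$ are associates in $T$ does the decisive work.
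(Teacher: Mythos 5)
Your proof is correct and follows essentially the same route as the paper's: both decompose the element via Lemma~\ref{r=cd}, use the principal-ideal property from Theorem~\ref{ring B} to pull the unit factor into $B$, and invoke the flatness identity $(p_1,\ldots,p_k)T\cap S=(p_1,\ldots,p_k)S$ to land the $S$-factor inside $(p_1,\ldots,p_k)S$. The only cosmetic difference is ordering (you establish $c\in J$ before showing the unit lies in $B$, while the paper does the reverse), and note that your ``sharp reading'' of Lemma~\ref{r=cd} is just the trivial fact that $b=cd$ with $d$ a unit already makes $c$ and $b$ associates in $T$ — no inspection of the lemma's internals is needed.
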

\begin{proof}
Consider some finitely generated ideal $I$ of $B$. We know by Lemma \ref{extended ideals} that $I=(p_1,\ldots, p_k)B$ for $p_i\in  S$. Let $c\in IT\cap B$. We will show that $c\in IB$. If $I=B$, then $IT\cap B=BT\cap B=B=IB$ as desired. Otherwise $IT\subseteq M$, and so $c\in IT\cap B\subseteq M\cap B$.
%% each p_i\in B\cap M\subset M since I\neq B, so then IT\subset MT=M
 Then, by Lemma \ref{r=cd}, $c=qu$ for $q\in (x_1,\ldots, x_n)S$ and unit $u$ in $T$. Thus $qu=c\in qT\cap B=qB$ since $qB$ is a principal ideal of $B$. Since $qu\in qB$, $u\in B$ and since $u\notin M$, $u\notin B\cap M$, and thus $u$ is a unit in $B$. 
 Observe,
\[cu^{-1}=q\in (p_1,\ldots,p_k)T\cap S=(p_1,\ldots, p_k)S\subset (p_1,\ldots,p_k)B=IB.\]
Since $u$ is a unit in $B$, this implies that $c\in IB$ as well, as desired. 
\end{proof}

As a consequence of Theorem \ref{closed ideals of ring B}, $B\cap M=(x_1,\ldots, x_n)B$, and so the maximal ideal of $B$ is $(x_1,\ldots, x_n)B$.

\begin{theorem}\label{completion of B}
The ring $(B,B\cap M)$ is Noetherian with completion $T$ and is a RLR.   
\end{theorem}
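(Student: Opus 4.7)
The plan is to apply Corollary \ref{alt completion proving machine} directly. Recall that this corollary says: if $(R, R\cap M)$ is a quasi-local subring of $T$ containing $R_0$, and if every finitely generated ideal $I$ of $R$ satisfies $IT\cap R = IR$, then $R$ is Noetherian, $\widehat{R} = T$, and $R$ is a regular local ring. So the proof reduces to verifying these three hypotheses for $B$.

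First, I would observe that $B$ is a quasi-local subring of $T$ with maximal ideal $B\cap M$: this is built into the conclusion of Theorem \ref{ring B}. Next, the containment $R_0 \subseteq B$ follows from the chain $R_0 \subseteq S \subseteq A \subseteq B \subseteq T$, since by definition every $S^\star$-subring of $T$ contains $S$ (and $S$ contains $R_0$ by construction in Theorem \ref{ring S}). Finally, the key finitely-generated ideal condition $IT\cap B = IB$ for every finitely generated ideal $I$ of $B$ is exactly the content of Theorem \ref{closed ideals of ring B}, which we just established.

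With all three hypotheses of Corollary \ref{alt completion proving machine} verified, the conclusion is immediate: $B$ is Noetherian, its completion $\widehat{B}$ equals $T$, and $B$ is a regular local ring. The proof therefore should be only a couple of sentences. There is no real obstacle here, since all the substantive work was already done in Theorems \ref{ring B} and \ref{closed ideals of ring B}; this theorem is simply the payoff of packaging those results into the hypotheses of the completion-proving corollary.
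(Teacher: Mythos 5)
Your proposal is correct and matches the paper's own proof exactly: both verify the three hypotheses of Corollary \ref{alt completion proving machine} (quasi-locality, $R_0 \subseteq B$, and $IT\cap B = IB$ via Theorem \ref{closed ideals of ring B}) and then invoke the corollary. The only difference is that you spell out the chain $R_0 \subseteq S \subseteq A \subseteq B$ a bit more explicitly, which the paper leaves implicit.
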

\begin{proof}
We have that $(B,B\cap M)$ is a quasi-local subring of the complete local ring $T$ with $R_0\subset B$. By Theorem \ref{closed ideals of ring B}, for every finitely generated ideal $I$ of $B$, $IT\cap B=IB$. Thus by Corollary \ref{alt completion proving machine}, $B$ is Noetherian with completion $T$ and $B$ is a RLR. 
\end{proof}

In our final theorem we will show that $B$ is excellent with a countable spectrum.

\begin{theorem}\label{B is excellent}
For any $n\geq 2$, there exists an uncountable $n$-dimensional, excellent, regular local ring, with countable spectrum. 
\end{theorem}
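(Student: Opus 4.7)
By Theorem \ref{completion of B}, $(B, B\cap M)$ is already known to be an uncountable regular local ring with $\widehat{B} = T = \Q[[x_1,\ldots,x_n]]$. So I would only need to verify three remaining items: that $B$ has Krull dimension $n$, that $\spec B$ is countable, and that $B$ is excellent.

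The dimension is immediate: $B$ is Noetherian local with completion $T$, and since completion preserves Krull dimension for Noetherian local rings, $\dim B = \dim T = n$.

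For countability of $\spec B$, I would use Lemma \ref{extended ideals} to produce an injection $\spec B \hookrightarrow \spec S$ via $P \mapsto P \cap S$. Given $P \in \spec B$, Noetherianity of $B$ and Lemma \ref{extended ideals} give $P = (p_1,\ldots,p_k)B$ with $p_i \in S$. Since each $p_i \in P \cap S$, we obtain $P = (P \cap S)B$; hence $P$ is recovered from $P \cap S$, so the contraction map is injective. Because $S$ is countable and Noetherian, its ideals are parameterized by a subset of the countable collection of finite subsets of $S$, so $\spec S$ is countable, and therefore so is $\spec B$.

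For excellence, I would apply Lemma \ref{alt def of excellent} and reduce everything to the excellence of $S$ established in Theorem \ref{ring S}. Given $P \in \spec B$ and $Q \in \spec T$ with $Q \cap B = P$, set $P_S = P \cap S \in \spec S$. The identity $P = P_S B$ from the previous paragraph yields $PT = P_S T$, hence $T/PT = T/P_S T$. Moreover $Q \cap S = (Q \cap B) \cap S = P_S$, so Theorem \ref{ring S} gives that $(T/P_S T)_Q$ is a regular local ring; equivalently $(T/PT)_Q$ is a regular local ring. Lemma \ref{alt def of excellent} then concludes that $B$ is excellent. I do not expect a serious obstacle inside this theorem itself: the essential analytic content is the property ``finitely generated ideals of $B$ are extended from $S$'' supplied by Lemma \ref{extended ideals}, which is precisely what the constructions in Sections \ref{construction of base ring}--\ref{construction of B} were engineered to produce, so the final theorem is a clean assembly of the reduction $PT = P_S T$ with the already-proved excellence of $S$.
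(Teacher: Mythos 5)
Your proposal is correct and follows essentially the same route as the paper: reduce excellence of $B$ to excellence of $S$ via Lemma \ref{alt def of excellent} and the ``ideals extended from $S$'' property of Lemma \ref{extended ideals}, and count $\spec B$ by comparing it with $\spec S$. The only cosmetic divergence is in the counting step --- you use injectivity of the contraction map $\spec B \to \spec S$, $P \mapsto P \cap S$ (justified by the identity $P = (P\cap S)B$), whereas the paper instead establishes surjectivity of the extension map $I_S \to I_B$ on all ideals and restricts to primes; both arguments rest on the same observation that $P$ is generated by its elements lying in $S$, and both work.
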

\begin{proof}
We will show that $B$ is the desired ring. 
We already have that $B$ is an uncountable RLR with $\widehat{B}=T$, and so $\dim(B)=\dim(T)=n$. Thus, all that remains to be shown is that $B$ is excellent with a countable spectrum. Since $(B,B\cap M)$ is local with $\widehat{B}=T$, by Lemma \ref{alt def of excellent}, to show that $B$ is excellent it is sufficient to show that, for every $P\in\spec B$ and $Q\in \spec T$ where $Q\cap B=P$, $(T/PT)_{Q}$ is a RLR. 
Let $P\in \spec B$ and $Q\in\spec T$ such that $Q\cap B= P$. Notice that 
\[Q\cap S = (Q\cap B)\cap S = P\cap S.\]
Since $B$ is Noetherian, every ideal is finitely generated. Furthermore, by Lemma \ref{extended ideals}, every finitely generated ideal of $B$ is extended from $S$, that is, it can be generated by elements in $S$. Let $P=(p_1,\ldots,p_k)B$ for $p_i\in S$. 
Then 
\[P\cap S = (p_1,\ldots, p_k)B\cap S \subseteq (p_1,\ldots, p_k)T\cap S = (p_1,\ldots,p_k)S\]
where the last equality follows because $IT\cap S=IS$, for every finitely generated ideal $I$ of $S$. Clearly, $(p_1,\ldots, p_k)S\subseteq P\cap S$, so then we have that $(p_1,\ldots,p_k)B\cap S=(p_1,\ldots, p_k)S$, and thus $Q\cap S=(p_1,\ldots, p_k)S$. 

Since $S$ is excellent with completion $T$, we know that $(T/(p_1,\ldots,p_k)T)_{Q}$ is a RLR. Since $(p_1,\ldots, p_k)T = PT$, we have that $(T/(p_1,\ldots,p_k)T)_{Q}=(T/PT)_{Q}$ and so $(T/PT)_{Q}$ is a RLR, as desired. Thus by Lemma \ref{alt def of excellent}, $B$ is excellent.

Finally, we will prove that $\spec B$ is countable. 
Let $I_R$ be the set of ideals of a ring $R$. We will show first that $I_B$ is countable. Define $f:I_S\rightarrow I_B$ by $(a_1,\ldots, a_k)S\mapsto (a_1,\ldots, a_k)B$. Since $S$ is Noetherian this function is well-defined. We will show that $f$ is surjective. Let $I$ be an ideal of $B$. By Lemma \ref{extended ideals}, $I=(p_1,\ldots, p_k)B$ for $p_i\in S$. Then $J=(p_1,\ldots, p_k)S$ is an ideal of $S$, and $f(J)=I$, so $f$ is surjective. Note that $I_S$ is countable since $S$ is Noetherian and countable. Thus $I_B$ is countable. Since $\spec B\subseteq I_B$, $B$ has a countable spectrum.  
\end{proof}

\section*{Acknowledgments}
The authors would like to thank Weitao Zhu whose senior thesis at Williams College inspired many of the definitions and constructions in this paper. We would also like to thank Cory Colbert whose work inspired our question and who contributed key ideas for the proof of Lemma \ref{pick-u}.  
Some of this work was completed during the SMALL REU at Williams College supported by funding from both an NSF grant (DMS-1659037) and the Clare Boothe Luce Scholarship Program.

%REFERENCES 
\bibliographystyle{unsrt}
\bibliography{refs}
\end{document}